\newcommand{\E}{\mathbf{E}}
\newtheorem{thm}{Theorem}[section]
\newtheorem{cor}[thm]{Corollary}
\newtheorem{prop}[thm]{Proposition}
\newtheorem{remark}[thm]{Remark}
\theoremstyle{definition}
\newtheorem{definition}[thm]{Definition}
\newtheorem{example}[thm]{Example}
\begin{document}
\title{A Classification of Isomorphism-Invariant Random Digraphs}
\author{
Selim Bahad{\i}r \& Elvan Ceyhan\\
%\thanks{
Department of Mathematics, Ko\c{c} University,\\
Sar{\i}yer, 34450, Istanbul, Turkey.%}
}
\date{\today}

\maketitle
\pagenumbering{roman} \setcounter{page}{1}

\pagenumbering{arabic} \setcounter{page}{1}

\begin{abstract}
\noindent
We classify isomorphism-invariant random digraphs according to where randomness resides, namely,
arcs, vertices, and vertices and arcs together which in turn yield
arc random digraphs (ARD), vertex random digraphs (VRD) and vertex-arc random digraphs (VARD), respectively.
This digraph classification can be viewed as an extension of the classification of isomorphism-invariant random graphs.
We introduce randomness in the direction of the edges of a given graph and obtain direction random digraphs (DRD) as well.
We classify DRDs according to which component is random in addition to the direction
and study the relations of DRDs with VARDs, VRDs and ARDs.
We also consider random nearest neighbor digraphs and determine their membership with respect to these digraph families.
\end{abstract}

\noindent
{\it Keywords: nearest neighbor digraphs, probability space, random graphs and digraphs}

\section{Introduction}
\label{sec:intro}
A \emph{directed graph} (or simply \emph{digraph}) $D$ consists of a non-empty finite set
$V (D)$ of elements called \emph{vertices}
and a finite set $A(D)$ of ordered pairs of distinct vertices called \emph{arcs} (or \emph{directed edges}).
We call $V (D)$ the vertex set and $A(D)$ the arc set of $D$.
We will often denote $D$ as $D = (V,A)$.

For an arc $(u, v)$, the vertex $u$ is called the \emph{tail} and the vertex $v$ is called the \emph{head}.
The head and tail of an arc are called the \emph{end-vertices}.
The above definition of a digraph implies that we allow a digraph to have arcs with the same end-vertices
(for example, both $(u,v)$ and $(v,u)$ may be in $A$).
In this paper we only consider simple digraphs.
That is, we do not allow \emph{parallel} (also called \emph{multiple}) arcs, i.e.,
pairs of arcs with the same tail and the same head, or \emph{loops} (i.e., arcs whose heads and tails coincide).
When parallel arcs and loops are admissible we speak of \emph{directed pseudographs};
directed pseudographs without loops are \emph{directed multigraphs} (\cite{chartrand:1996}).
For more information about graphs and digraphs see, e.g., \cite{chartrand:1996}.

For a positive integer $n$,
let $[n]=\{1,2,\dots ,n\}$,
$\mathcal{D}_n$ denote the set of all digraphs with vertex set $[n]$ and
$2^{\mathcal{D}_n}$ denote the set of all subsets of $\mathcal{D}_n$.
A \emph{random digraph} is a probability space $(\mathcal{D}_n,2^{\mathcal{D}_n}, P)$,
and we write $\mathbf{D}= ( \mathcal{D}_n, P)$
where $P$ is a probability measure.
We call a random digraph as \emph{degenerate} if all the probability mass is on one digraph.
We can also think of $\mathbf{D}$ as
the outcome of an experiment of picking a digraph from $\mathcal{D}_n$ with distribution $P$.
For every $D\in \mathcal{D}_n$, we write $P(\{D\})$ as $P(D)$ for brevity in notation.
Also, for a measure space $(\Omega, \mathcal{F}, \mu )$,
$\mathcal{F}^n$ and $ \mu^n $ denote the usual product $\sigma$-algebra and product measure, respectively.
For the set of real numbers, we consider the Borel $\sigma$-algebra, and
throughout this paper we suppress the $\sigma$-algebra notation as long as
there is no necessity nor ambiguity.

\begin{example}
\label{ex:Dnm}
(Uniform Random Digraph Model)
For positive integers $n$ and $m$ with $n\geq 2$ and $0<m< n(n-1)$, $\mathbf{D}(n,m)$ is the random digraph such that
\begin{align*}
 P(D) =
  \begin{cases}
   \frac{1}{{n(n-1) \choose m}}, & \text{if } |A(D)|=m \\
   0, &  {\rm otherwise}
  \end{cases}
\end{align*}
for every $D\in \mathcal{D}_n$.
In other words, $\mathbf{D}(n,m)$ picks a digraph uniformly at random among the ones with vertex set $[n]$
and having exactly $m$ arcs.
Note that there are ${ n(n-1) \choose m}$ such digraphs, and $m$ is not chosen to be 0 or $n(n-1)$ to obtain a non-degenerate random digraph.
Also notice that $\mathbf{D}(n,m)$ is the digraph version of the Erd\H{o}s-R\'{e}nyi random graph $\mathbf{G}(n,m)$ (\cite{erdos:1959}).
For some asymptotic properties of uniform random digraphs see \cite{luczak:1990} and \cite{graham:2008}.
\end{example}

A digraph $D_1$ is \emph{isomorphic} to a digraph $D_2$ (or $D_1$ and $D_2$ are \emph{isomorphic}) if there is a bijection $f:V(D_1)\rightarrow V(D_2)$ such that
$(u,v)\in A(D_1)$ if and only if $(f(u),f(v))\in A(D_2)$.
\begin{definition}
(Isomorphism Invariance)
Let $\mathbf{D}=(\mathcal{D}_n,P)$ be a random digraph.
We say that $\mathbf{D}$ is \emph{isomorphism-invariant} if $P(D_1)=P(D_2)$
whenever $D_1$ and $D_2$ are isomorphic digraphs in $\mathcal{D}_n$.
\end{definition}

Throughout the article, we only consider non-degenerate isomorphism-invariant random digraphs.
We follow the isomorphism-invariant graph classification of \cite{beer:2011}
pointing out the similarities and differences until we introduce randomness in the direction.

In Section \ref{sec:2}, we introduce the arc random digraphs (ARDs), vertex random digraphs (VRDs)
and vertex-arc random digraphs (VARDs).
In Section \ref{sec:3}, for $n\geq 4$,
we prove that there is no random digraph which is both an ARD and a VRD,
and there exist VARDs which are neither ARDs nor VRDs.
Section \ref{sec:drd} introduces the direction random digraphs (DRDs), direction-edge random digraphs (DERDs),
direction-vertex random digraphs (DVRDs) and direction-vertex-edge random digraphs (DVERDs).
Section \ref{sec:derdvard} examines the relations of DERDs with ARDs and VARDs.
In particular, we show that ARDs are the only random digraphs which are both DERD and VARD for $n\geq 4$,
and any DERD with $n\leq 3$ is a VARD.
Section \ref{sec:rnnd} presents random nearest neighbor digraphs (RNNDs)
and determines where they fit in these classifications.
Discussion and conclusions are provided in Section \ref{sec:dis}.
A list of abbreviations used in the article is provided in Table \ref{tab:abbreviations}.

\section{Preliminaries}
\label{sec:prelim}
We first summarize isomorphism-invariant random graphs introduced by \cite{beer:2011}.
A \emph{graph} $G$ is a finite non-empty set $V(G)$ of elements called \emph{vertices} together with a set $E(G)$ of
unordered pairs of vertices of $G$ called \emph{edges}.
An edge $\{u,v\}$ is denoted by $uv$ for convenience in the text.
Let $\mathcal{G}_n$ denote the set of all graphs with $V(G)=[n]$ and
$2^{\mathcal{G}_n}$ be the set of all subsets of $\mathcal{G}_n$.
A \emph{random graph} is a probability space $(\mathcal{G}_n,2^{\mathcal{G}_n}, P)$,
and we write $\mathbf{G}= ( \mathcal{G}_n, P)$ where $P$ is a probability measure.
We write $P(G)$ instead of $P(\{G\})$ for brevity in notation

\begin{table}[t]
\centering
\begin{tabular}{llll}
ARD:  & Arc Random Digraph &(p. \pageref{def:ard})\\

DERD:  & Direction-Edge Random Digraph &(p. \pageref{def:derd})\\

DRD:  & Direction Random Digraph &(p. \pageref{def:drd})\\

DVERD:  & Direction-Vertex-Edge Random Digraph &(p. \pageref{def:dvrd})\\

DVRD:  & Direction-Vertex Random Digraph &(p. \pageref{def:dvrd})\\

ERG: & Edge Random Graph &(p. \pageref{def:erg})\\

GARD: & Generalized Arc Random Digraph &(p. \pageref{def:gard})\\

RNND: & Random Nearest Neighbor Digraph &(p. \pageref{def:rnnd})\\

VARD:  & Vertex-Arc Random Digraph &(p. \pageref{def:vard})\\

VERG: & Vertex-Edge Random Graph &(p. \pageref{def:verg})\\

VRD:  & Vertex Random Digraph &(p. \pageref{def:vrd})\\

VRG:  & Vertex Random Graph &(p. \pageref{def:vrg})\\
\end{tabular}
\caption{
\label{tab:abbreviations}
A list of abbreviations used in the article together with the page numbers where they are formally defined.}
\end{table}

The random graph model was first introduced by \cite{gilbert:1959} and \cite{erdos:1959}.
The model of Gilbert corresponds to edge random graph $\mathbf{G}(n,p_e)$ in \cite{beer:2011}
in which each edge is inserted, independent of others, with probability $p_e$.
The model introduced by Erd\H{o}s and R\'{e}nyi is the uniform random graph $\mathbf{G}(n,m)$ which
picks a graph with vertex set $[n]$ uniformly at random among the ones with exactly $m$ edges.
However, in the literature,
both of these models are usually called Erd\H{o}s-R\'{e}nyi model as they developed the theory.

A graph $G_1$ is \emph{isomorphic} to a graph $G_2$ (or $G_1$ and $G_2$ are \emph{isomorphic}) if there exists a bijection $f:V(G_1)\rightarrow V(G_2)$ such that
$uv \in E(G_1)$ if and only if $f(u)f(v)\in E(G_2)$.
We say that the random graph $\mathbf{G}=(\mathcal{G}_n,P)$ is \emph{isomorphism-invariant} if $P(G_1)=P(G_2)$ whenever $G_1$ is isomorphic to $G_2$.

\begin{definition}\label{def:erg}
An \emph{edge random graph} (ERG) is a random graph $\mathbf{G}(n,p_e)=(\mathcal{G}_n, P)$ where $p_e\in [0,1]$ and
\begin{align*}
P(G)=p_e^{|E(G)|} (1-p_e)^{{n\choose 2}-|E(G)|} \text{ for every }  G\in \mathcal{G}_n .
\end{align*}
\end{definition}

Let $\Omega$ be a set, $\mathbf{x}=(x_1,\dots ,x_n) \in \Omega^n$ and
$\phi: \Omega \times \Omega \rightarrow \{0,1\}$ be a symmetric function.
Then the $(\mathbf{x},\phi )$\emph{-graph}, denoted $G(\mathbf{x},\phi)$, is defined to be the graph, $G$,
with vertex set $[n]$ such that for every $i,j\in [n]$ with $i\neq j$
we have $ij\in E(G)$ if and only if $ \phi (x_i,x_j)=1$.

\begin{definition}\label{def:vrg}
Let $(\Omega, \mathcal{F}, \mu )$ be a probability space and
$\phi : \Omega \times \Omega \rightarrow \{0,1\}$  be a symmetric measurable function.
The \emph{vertex random graph} (VRG), $\mathbf{G}(n, \Omega , \mu , \phi )$,
is the random graph $(\mathcal{G}_n, P)$ satisfying
\begin{align*}
P(G)= \int \mathbf{1}_{ \{ G(\mathbf{x},\phi )=G \} } d(\mu \mathbf{x}) \text{ for every } G\in \mathcal{G}_n,
\end{align*}
where $d(\mu \mathbf{x})$ is short-hand for the product integrator $d(\mu ^n (\mathbf{x}))=d(\mu x_1)\cdots d(\mu x_n)$.
\end{definition}
Notice that in a VRG the randomness lies in the structure attached to the vertices, and
once these random structures have been assigned to the vertices, all the edges are uniquely determined.

\begin{definition}\label{def:verg}
Let $(\Omega, \mathcal{F}, \mu )$ be a probability space and
$\phi :\Omega \times \Omega \rightarrow [0,1]$ be a symmetric measurable function.
The \emph{vertex-edge random graph} (VERG), $\mathbf{G}(n, \Omega, \mu ,\phi )$, is the random graph $(\mathcal{G}_n, P)$ with
\begin{align*}
P(G)= \int P_{\mathbf{x}}(G) d(\mu \mathbf{x}), \text{ for every } G\in \mathcal{G}_n,
\end{align*}
where for given $\mathbf{x}=(x_1,\dots ,x_n)$ and $G$
\begin{align*}
P_{\mathbf{x}}(G)=\prod_{ij\in E(G)} \phi (x_i,x_j) \times \prod_{ij\notin E(G)} (1- \phi (x_i,x_j)).
\end{align*}
\end{definition}
In words, a VERG is generated as follows: a random sample of size $n$ is drawn with distribution $\mu$ from $\Omega$,
say $\mathbf{X}=(X_1,\dots , X_n)$.
Then conditional on $\mathbf{X}$, independently for each pair of distinct vertices $i$ and $j$,
the edge $ij$ is inserted with probability $\phi (X_i,X_j)$.

Observe that the same notation $\mathbf{D}(n, \Omega,\mu, \phi)$ is used for both VRGs and VERGs.
However, this causes no confusion, since $\phi$ takes values in $\{0,1\}$ for VRGs and in $[0,1]$ for VERGs.
In other words, VRGs form a special case of VERGs with $\phi$ taking values only in $\{0,1\}$.
Therefore, every VRG is a VERG.
In addition, it is easy to see that letting $\phi$ to be identically equal to $p$ gives that every ERG is a VERG.

Let $\mathbf{G_1}=(\mathcal{G}_n,P_1)$ and $\mathbf{G_2}=(\mathcal{G}_n,P_2)$ be random graphs.
The \emph{total variation distance} between $\mathbf{G_1}$ and $\mathbf{G_2}$ is defined to be
\begin{align*}
d_{ \text{TV} } (\mathbf{G_1},\mathbf{G_2})= \frac{1}{2} \sum_{G\in \mathcal{G}_n} |P_1(G)-P_2(G)|.
\end{align*}
Similarly, for any two random digraphs $\mathbf{D_1}=(\mathcal{D}_n,P_1)$ and $\mathbf{D_2}=(\mathcal{D}_n,P_2)$,
the \emph{total variation distance} between $\mathbf{D_1}$ and $\mathbf{D_2}$ is defined to be
\begin{align*}
d_{ \text{TV} } (\mathbf{D_1},\mathbf{D_2})= \frac{1}{2} \sum_{D\in \mathcal{D}_n} |P_1(D)-P_2(D)|.
\end{align*}

\section{ARDs, VRDs and VARDs}
\label{sec:2}
\subsection{Arc random digraphs}
\label{subsec:ard}
One of the most commonly studied random digraphs is the binomial (or Bernoulli) random digraph model, $\mathbf{D}(n,p_a)$, in which each of the $n(n-1)$ possible arcs is included independently with probability $p_a$.
Such random digraphs give rise to arc random digraphs.
\begin{definition}\label{def:ard}
An \emph{arc random digraph} (ARD) is a random digraph $\mathbf{D}(n,p_a)=(\mathcal{D}_n, P)$ where $0<p_a<1$ and
\begin{align*}
P(D)=p_a^{|A(D)|} (1-p_a)^{n(n-1)-|A(D)|} \  \text{ for every }  D\in \mathcal{D}_n .
\end{align*}
\end{definition}
Notice that ARDs are the digraph counterparts of random graphs $\mathbf{G}(n,p_e)$ due to \cite{gilbert:1959}.
For some asymptotic properties of $\mathbf{D}(n,p_a)$ see \cite{Karp:1990}, \cite{luczak:2009}, and \cite{Krivelevich:2013}.

\begin{definition}\label{def:gard}
Let $\mathrm{p_a}:[n]\times [n]\rightarrow [0,1]$ be a function (that is not necessarily symmetric in its arguments).
The \emph{generalized arc random digraph} (GARD), $\mathbf{D}(n,\mathrm{p_a})$, is the random digraph $(\mathcal{D}_n, P)$ with
\begin{align*}
P(D)=\prod_{(i,j)\in A(D)} \mathrm{p_a}(i,j) \times \prod_{(i,j)\notin A(D)} (1-\mathrm{p_a}(i,j)) \ \text{ for every } D\in \mathcal{D}_n.
\end{align*}
\end{definition}
In other words, in a GARD each arc appears independently of others and
the arc $(i,j)$ occurs with probability $\mathrm{p_a}(i,j)$.
Note that an ARD is special case of a GARD with a constant $\mathrm{p_a}$, i.e., $\mathrm{p_a}(i,j)=p_a$ for all $i,j$.
As the classical random digraph model $\mathbf{D}(n,p_a)$ may not fit real life networks,
inhomogeneous models like GARDs are of interest for such scenarios (see, e.g., \cite{bloznelis:2012}).

Clearly, any ARD is isomorphism-invariant.
The following proposition implies that a GARD is isomorphism-invariant if and only if it is an ARD.
\begin{prop}
\label{prop:gard}
Let $\mathbf{D}$ be an isomorphism-invariant GARD.
Then $\mathbf{D}=\mathbf{D}(n,p_a)$ for some $p_a$, i.e., $\mathbf{D}$ is an ARD.
\end{prop}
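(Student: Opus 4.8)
The plan is to extract the parameters $\mathrm{p_a}(i,j)$ from the probabilities $P(D)$ of suitable small digraphs and then use isomorphism-invariance to force all of them to be equal. First I would dispose of a degenerate-looking case: note that for the single-arc digraph $D_{ij}$ with $A(D_{ij})=\{(i,j)\}$ we have
\begin{align*}
P(D_{ij})=\mathrm{p_a}(i,j)\prod_{(k,\ell)\neq (i,j)}(1-\mathrm{p_a}(k,\ell)),
\end{align*}
and for the empty digraph $D_\emptyset$ we have $P(D_\emptyset)=\prod_{(k,\ell)}(1-\mathrm{p_a}(k,\ell))$. Since any two single-arc digraphs are isomorphic (map the tail to the tail, the head to the head, extend arbitrarily on the remaining $n-2$ vertices), isomorphism-invariance gives $P(D_{ij})=P(D_{k\ell})$ for all ordered pairs of distinct vertices, hence $\mathrm{p_a}(i,j)\prod_{(s,t)\neq(i,j)}(1-\mathrm{p_a}(s,t))$ is the same for all $(i,j)$.

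Next I would handle the possibility that some $1-\mathrm{p_a}(s,t)=0$. If $\mathrm{p_a}(s,t)=1$ for some pair, then $P(D)=0$ for every $D$ not containing $(s,t)$; but $D_\emptyset$ and, say, $D_{ij}$ for $(i,j)\neq(s,t)$ are isomorphic to digraphs that do contain or do not contain $(s,t)$ in incompatible ways — more simply, the digraph with arc set exactly $\{(s,t)\}$ and the digraph with arc set exactly $\{(t,s)\}$ are isomorphic, yet the latter has probability $0$ (it omits $(s,t)$) while we can check the former need not, giving a contradiction unless in fact $\mathrm{p_a}$ takes value $1$ on all pairs, which would make $\mathbf{D}$ degenerate — excluded by hypothesis. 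A symmetric argument rules out $\mathrm{p_a}(s,t)=0$ for all pairs simultaneously leading to degeneracy, but we must still rule out $\mathrm{p_a}(s,t)=0$ for \emph{some} pair: if $\mathrm{p_a}(s,t)=0$ then $P(D_{st})=0$, and since $D_{st}\cong D_{ij}$ we get $P(D_{ij})=0$ for all $(i,j)$, forcing $\mathrm{p_a}(i,j)=0$ for all pairs (as $\prod_{(s,t)\neq(i,j)}(1-\mathrm{p_a}(s,t))$ cannot vanish once we know no factor is zero, handled inductively), again degenerate. So we may assume $0<\mathrm{p_a}(i,j)<1$ for all $(i,j)$, and in particular all factors $1-\mathrm{p_a}(s,t)$ are strictly positive.

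With all factors nonzero, divide the relation $P(D_{ij})=P(D_{k\ell})$ through by $\prod_{(s,t)}(1-\mathrm{p_a}(s,t))$ to obtain
\begin{align*}
\frac{\mathrm{p_a}(i,j)}{1-\mathrm{p_a}(i,j)}=\frac{\mathrm{p_a}(k,\ell)}{1-\mathrm{p_a}(k,\ell)}
\end{align*}
for all ordered pairs of distinct vertices $(i,j),(k,\ell)$. Since $t\mapsto t/(1-t)$ is strictly increasing on $[0,1)$, this yields $\mathrm{p_a}(i,j)=\mathrm{p_a}(k,\ell)$; call the common value $p_a$. Then the GARD formula becomes $P(D)=p_a^{|A(D)|}(1-p_a)^{n(n-1)-|A(D)|}$, which is exactly the ARD $\mathbf{D}(n,p_a)$, and $0<p_a<1$ by the previous paragraph. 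The main obstacle is really the case analysis of the second paragraph: the single-arc comparison alone only controls the \emph{product} $\mathrm{p_a}(i,j)\prod_{(s,t)\neq(i,j)}(1-\mathrm{p_a}(s,t))$, so one has to separately and carefully exclude the boundary values $0$ and $1$ (using non-degeneracy and a small inductive argument) before the strictly-monotone-function step can be applied; once $0<\mathrm{p_a}(i,j)<1$ is known for every pair, the rest is immediate. For $n=2$ there is only one ordered pair each way and the statement is trivial, so one may assume $n\geq 3$ when invoking the "extend the bijection on the remaining vertices" remark, though in fact $n=2$ already has $(1,2)\cong(2,1)$ so no special casing is needed.
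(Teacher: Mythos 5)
The interior case of your argument (all $\mathrm{p_a}(i,j)$ strictly between $0$ and $1$) is correct, and it is a different route from the paper's: the paper never compares probabilities of individual digraphs at all, but instead notes that $\mathrm{p_a}(i,j)=P\bigl((i,j)\in A(\mathbf{D})\bigr)=\sum_{(i,j)\in A(D)}P(D)$ and uses a permutation sending $i\mapsto k$, $j\mapsto l$ to get a bijection between $\{D:(i,j)\in A(D)\}$ and $\{D':(k,l)\in A(D')\}$ matching isomorphic digraphs, so the two marginals, hence the two parameters, are equal. That argument needs no non-degeneracy assumption and no discussion of the boundary values $0$ and $1$.

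The genuine gap is exactly in the case analysis you identify as the crux. From the single-arc comparisons alone, the assumption $\mathrm{p_a}(s,t)=1$ only yields the following: $P(D_{ij})=0$ for every $(i,j)\neq(s,t)$ (each such product contains the factor $1-\mathrm{p_a}(s,t)=0$), hence by isomorphism invariance $P(D_{st})=0$ as well, which means \emph{some second} pair also has parameter $1$. Your claimed dichotomy ``contradiction unless $\mathrm{p_a}\equiv 1$'' does not follow from this, and the comparisons you use cannot be pushed further: for the non-degenerate GARD with $\mathrm{p_a}(1,2)=\mathrm{p_a}(2,1)=1$ and all other parameters $1/2$, every single-arc digraph and the empty digraph has probability $0$, so all of your comparisons are vacuously satisfied even though not all parameters equal $1$ and the digraph is not degenerate; to expose the failure of isomorphism invariance there you must compare richer digraphs, e.g.\ the two-arc digraphs with arc sets $\{(1,2),(2,1)\}$ and $\{(1,3),(3,1)\}$. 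Your value-$0$ case then inherits the problem, since it is ``handled inductively'' on the assumption that the value-$1$ case has already been excluded. The clean repair is to compare events rather than atoms, i.e.\ work with $P\bigl((i,j)\in A(\mathbf{D})\bigr)=\mathrm{p_a}(i,j)$ directly as the paper does, which disposes of the boundary values automatically; staying within your atom-by-atom framework would require comparing additional families of digraphs beyond the single-arc ones.
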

\begin{proof}
We show that $\mathrm{p_a}(i,j)=\mathrm{p_a}(k,l)$ for any two ordered pairs $(i,j)$ and $(k,l)$.
First note that
\begin{align}
\label{eq:gard1}
\mathrm{p_a}(i,j)=P((i,j)\in A(\mathbf{D}))=\sum_{(i,j)\in A(D)} P(D).
\end{align}
Fix a permutation on $[n]$ which maps $i$ to $k$ and $j$ to $l$.
Observe that this permutation induces a one-to-one correspondence between the sets $\{D\in\mathcal{D}_n: (i,j)\in A(D)\}$ and $\{D'\in\mathcal{D}_n: (k,l)\in A(D')\}$ such that matched digraphs are isomorphic.
As $\mathbf{D}$ is isomorphism-invariant, this correspondence implies
\begin{align}
\label{eq:gard2}
\sum_{(i,j)\in A(D)} P(D)=\sum_{(k,l)\in A(D')} P(D').
\end{align}
Hence, the result follows by \eqref{eq:gard1} and \eqref{eq:gard2}.
\end{proof}

\subsection{Vertex random digraphs}
\label{subsec:vrd}

Let $\Omega$ be a set, $\mathbf{x}=(x_1,\dots ,x_n) \in \Omega^n$ and
$\phi: \Omega \times \Omega \rightarrow \{0,1\}$ be a function.
Then the $(\mathbf{x},\phi )$\emph{-digraph}, denoted $D(\mathbf{x},\phi)$, is defined to be the digraph, $D$,
with vertex set $[n]$ such that for all $i,j\in [n]$ with $i\neq j$ we have
\begin{align*}
(i,j)\in A(D) \text{ if and only if } \phi (x_i,x_j)=1.
\end{align*}

Clearly, every digraph $D$ with $V(D)=[n]$ is an $(\mathbf{x},\phi)$-digraph for some choice of $\Omega,\mathbf{x}$ and $\phi$.
More specifically, choose $\mathbf{x}$ to be the identity function on $\Omega=[n]$ and
define $\phi (i,j)=\mathbf{1}_{ \{ (i,j)\in A(D) \} }$ where $\mathbf{1}_{\{\cdot \}}$ is the indicator function.

\begin{definition}\label{def:vrd}
Let $(\Omega, \mathcal{F}, \mu )$ be a probability space and
$\phi : \Omega \times \Omega \rightarrow \{0,1\}$  be a measurable function.
The \emph{vertex random digraph} (VRD), $\mathbf{D}(n, \Omega , \mu , \phi )$, is the random digraph $(\mathcal{D}_n, P)$ with
\begin{align*}
P(D)= \int \mathbf{1}_{ \{ D(\mathbf{x},\phi )=D \} } d(\mu \mathbf{x}) \text{ for every } D\in \mathcal{D}_n.
\end{align*}
\end{definition}
Note that in a VRD the randomness resides in the structure attached to the vertices, as in VRGs, and
when these random structures are assigned to the vertices, all the arcs are uniquely determined.
\begin{example}
Proximity Catch Digraphs (PCDs)( \cite{ceyhan:2011}):
Let $(\Omega, \mathcal{F}, \mu)$ be a probability space.
The \emph{proximity map} $N(\cdot )$ is a function from $\Omega$ to $\mathcal{F}$.
The \emph{proximity region} associated with $x\in \Omega$, denoted $N(x)$, is the image of $x\in \Omega$ under $N(\cdot )$.
The points in $N(x)$ are thought of as being ``closer'' to $x\in \Omega$ than the points in $\Omega\backslash N(x)$.
For a given $\mathbf{x}=(x_1,x_2,\dots ,x_n)$ the \emph{proximity catch digraph} is the digraph with the vertex set $V=[n]$
and the arc set $A=\{(i,j): x_j\in N(x_i)\}$.
In other words, we insert the arc $(i,j)$ if and only if $x_j$ is in the proximity region of $x_i$.
Note that for a given $N(\cdot)$, a random PCD is a VRD, $\mathbf{D}(n,\Omega,\mu,\phi)$,
with $\phi(x_i,x_j)=\mathbf{1}_{ \{x_j\in N(x_i)\} }$.
For instance, one can take $\Omega=\mathbb{R}$, $N(x)=[x,\infty)$ and $\phi(x,y)=\mathbf{1}_{ \{x\leq y \}}$.
\end{example}

\begin{example}
Random Intersection Digraphs (\cite{bloznelis:2010}):
Let $n$ and $m$ be positive integers,
and $\mu$ be a distribution on $2^{[m]}\times 2^{[m]}$ (ordered pairs of subsets of $[m]$).
Given two collections of subsets $S_1,\dots , S_n$ and $T_1,\dots , T_n$ of the set $[m]$,
define the intersection digraph with vertex set $[n]$ such that
the arc $(i,j)$ is present in the digraph whenever $S_i \cap T_j$ is nonempty for $i\neq j$.
$\mathbf{D}(n,m,\mu)$ is the random intersection digraph generated by independent and identically distributed pairs of random subsets $(S_i,T_i)$ under $\mu$, $1\leq i\leq n$.
Note that $\mathbf{D}(n,m,\mu)$ is a VRD with $\Omega =2^{[m]}\times 2^{[m]}$ and
$\phi((S,T),(S',T'))=\mathbf{1}_{ \{ S \cap T' \neq \emptyset \}}$.
\end{example}

By letting $\Omega=[0,1]$, $\mu$ be the uniform distribution over $[0,1]$ and $\phi(x,y)=\mathbf{1}_{\{x\leq p_a\}}$,
we see that every $\mathbf{D}(2,p_a)$ is a VRD.

Recall that in a VRD, $\mathbf{D}(n,\Omega,\mu, \phi)$, $\phi$ is not required to be symmetric.
However, if $\phi$ is a symmetric function, whenever we see the arc $(i,j)$ in $A(\mathbf{D})$,
we see the arc $(j,i)$ as well.
In this case, for every $D\in \mathcal{D}_n$ in which there exists $(i,j)\in A(D)$ with $(j,i)\notin A(D)$, we have $P(D)=0$.
On the other hand, in an ARD, $\mathbf{D}(n,p_a)$, we have $P(D)>0$ for every $D\in \mathcal{D}_n$.
Therefore, whenever $\phi$ is symmetric and nonconstant $\mu^2$-a.s.,
$\mathbf{D}(n,\Omega,\mu, \phi)$ is not an ARD.
For instance, one can take $\Omega=\mathbb{R}^d$, $\mu$ to be an a.e. continuous distribution and $\phi(x,y)=\mathbf{1}_{ \{||x-y||_d \leq r \} }$,
where $||\cdot ||_d$ is the usual Euclidean norm in $\mathbb{R}^d$ and $r$ is a fixed positive real number.
Notice that these random digraphs are random PCDs in which $N(x)$ is the closed ball with radius $r$ and center $x$.
If we consider symmetric arcs as one edge only,
these type of random digraphs reduce to what is called \emph{random geometric graphs}.
For more information about random geometric graphs see \cite{penrose:2003}.

\subsection{Vertex-arc random digraphs}
\label{subsec:vard}
We now generalize the random digraphs introduced in the previous two subsections by combining the structures where the randomness lies.
\begin{definition}\label{def:vard}
Let $(\Omega, \mathcal{F}, \mu)$ be a probability space and
$\phi :\Omega \times \Omega \rightarrow [0,1]$ be a measurable function.
The \emph{vertex-arc random digraph} (VARD), $\mathbf{D}(n, \Omega, \mu ,\phi )$, is the random digraph $(\mathcal{D}_n, P)$ with
\begin{align*}
P(D)= \int P_{\mathbf{x}}(D) d(\mu \mathbf{x}), \text{ for every } D\in \mathcal{D}_n,
\end{align*}
where for given $\mathbf{x}=(x_1,\dots ,x_n)$ and $D=(V,A)$
\begin{align*}
P_{\mathbf{x}}(D)=\prod_{(i,j)\in A} \phi (x_i,x_j) \times \prod_{(i,j)\notin A} (1- \phi (x_i,x_j)).
\end{align*}
\end{definition}
The construction of a VARD is almost same with VERGs.
A random sample of size $n$ is drawn with distribution $\mu$ from $\Omega$,
say $\mathbf{X}=(X_1,\dots , X_n)$, and
then conditional on $\mathbf{X}$, independently for each pair of distinct vertices $i$ and $j$,
the arc $(i,j)$ is inserted with probability $\phi (X_i,X_j)$.

Note that we use the same notation $\mathbf{D}(n, \Omega,\mu, \phi)$ for both VRDs and VARDs.
But, since $\phi$ takes values only 0 or 1 for VRDs and in $[0,1]$ for VARDs,
this causes no confusion.
Particularly, VRDs form a special case of VARDs with $\phi$ taking values only in $\{0,1\}$.
Therefore, every VRD is a VARD.
Moreover, it is easy to verify that letting $\phi$ to be identically equal to $p_a$ gives that every ARD is a VARD.

\begin{prop}
\label{prop:isoinv}
Every VARD is isomorphism-invariant.
\end{prop}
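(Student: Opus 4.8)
The plan is to show directly that $P(D_1) = P(D_2)$ whenever $D_1$ and $D_2$ are isomorphic, by tracking how an isomorphism acts on the integrand $P_{\mathbf{x}}(D)$. Fix isomorphic digraphs $D_1, D_2 \in \mathcal{D}_n$ and a bijection $f:[n]\to[n]$ with $(u,v)\in A(D_1)$ iff $(f(u),f(v))\in A(D_2)$. The key observation is that $f$ also acts on sample points: given $\mathbf{x}=(x_1,\dots,x_n)\in\Omega^n$, define $\mathbf{x}^f = (x_{f^{-1}(1)},\dots,x_{f^{-1}(n)})$, i.e.\ the $j$-th coordinate of $\mathbf{x}^f$ is $x_{f^{-1}(j)}$. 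First I would verify the pointwise identity $P_{\mathbf{x}}(D_1) = P_{\mathbf{x}^f}(D_2)$: since $(i,j)\in A(D_1) \iff (f(i),f(j))\in A(D_2)$, the factor $\phi(x_i,x_j)$ appearing in the product over $A(D_1)$ corresponds exactly to the factor $\phi\big((\mathbf{x}^f)_{f(i)},(\mathbf{x}^f)_{f(j)}\big) = \phi(x_i,x_j)$ in the product over $A(D_2)$ for $P_{\mathbf{x}^f}(D_2)$, and likewise the complementary factors $(1-\phi(x_i,x_j))$ match up. So the two products have exactly the same factors, just reindexed.

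Next I would integrate both sides against $\mu^n$. We get
\begin{align*}
P(D_1) = \int P_{\mathbf{x}}(D_1)\, d(\mu\mathbf{x}) = \int P_{\mathbf{x}^f}(D_2)\, d(\mu\mathbf{x}).
\end{align*}
Now the map $\mathbf{x}\mapsto \mathbf{x}^f$ is just a permutation of the coordinates of $\Omega^n$, hence it is a measurable bijection of $(\Omega^n,\mathcal{F}^n)$ to itself that preserves the product measure $\mu^n$ (the product measure is invariant under coordinate permutations). Therefore the change of variables $\mathbf{y} = \mathbf{x}^f$ gives
\begin{align*}
\int P_{\mathbf{x}^f}(D_2)\, d(\mu\mathbf{x}) = \int P_{\mathbf{y}}(D_2)\, d(\mu\mathbf{y}) = P(D_2),
\end{align*}
which is the desired equality.

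I do not expect any serious obstacle here; the proof is essentially bookkeeping. The one point requiring a little care is getting the indexing convention for $\mathbf{x}^f$ right so that the pointwise identity $P_{\mathbf{x}}(D_1)=P_{\mathbf{x}^f}(D_2)$ holds exactly (rather than with $f$ and $f^{-1}$ swapped), and stating cleanly the standard fact that $\mu^n$ is invariant under permutation of coordinates — this is where the hypothesis that $P$ arises from a \emph{product} measure is used, and it is the crux of why the isomorphism invariance works. Everything else is a routine matching of factors in a finite product.
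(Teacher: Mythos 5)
Your proof is correct and takes essentially the same route as the paper's: you establish the pointwise identity $P_{\mathbf{x}}(D_1)=P_{\mathbf{x}^f}(D_2)$ by reindexing the finite product along the isomorphism, and then conclude by the invariance of the product measure $\mu^n$ under permutation of coordinates. The paper phrases this last step via Fubini's theorem and a change of variables (with the permuted vector called $\mathbf{y}$), but the argument is the same, and your indexing $(\mathbf{x}^f)_j = x_{f^{-1}(j)}$ matches the paper's $y_i = x_{\sigma^{-1}(i)}$ exactly.
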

\begin{proof}
Let $\mathbf{D}(n, \Omega, \mu ,\phi )$ be a VARD and $D,D'\in \mathcal{D}_n$ be isomorphic digraphs.
Then there exists a permutation $\sigma$  on $[n]$ such that
\begin{align*}
(i,j)\in A(D) \ \Leftrightarrow \ (\sigma(i),\sigma(j))\in A(D').
\end{align*}
Let $\sigma^{-1}$ be the inverse of $\sigma$ and $\mathbf{y}=(y_1,\dots ,y_n)$ such that
$y_i=x_{\sigma^{-1}(i)}$ for all $1\leq i \leq n$, i.e., $x_i=y_{\sigma(i)}$ for all $1\leq i \leq n$.
Then note that
\begin{align}
P_{\mathbf{x}}(D)&=\prod_{(i,j)\in A(D)} \phi (x_i,x_j) \times \prod_{(i,j)\notin A(D)} (1- \phi (x_i,x_j)) \nonumber\\
&=\prod_{(i,j)\in A(D)} \phi (y_{\sigma(i)},y_{\sigma(j)}) \times \prod_{(i,j)\notin A(D)} (1- \phi (y_{\sigma(i)},y_{\sigma(j)})) \nonumber \\
&=\prod_{(\sigma(i),\sigma(j))\in A(D')} \phi (y_{\sigma(i)},y_{\sigma(j)}) \times \prod_{(\sigma(i),\sigma(j))\notin A(D')} (1- \phi (y_{\sigma(i)},y_{\sigma(j)})) \nonumber \\
&=\prod_{(i,j)\in A(D')} \phi (y_i,y_j) \times \prod_{(i,j)\notin A(D')} (1- \phi (y_i,y_j)) \nonumber \\
&=P_{\mathbf{y}}(D'). \label{eq:iso1}
\end{align}
As $\mathbf{y}$ is a permutation of $\mathbf{x}$, Fubini's theorem and \eqref{eq:iso1} imply that
\begin{align}
\label{eq:iso2}
P(D)=\int P_{\mathbf{x}}(D) \mu(d\mathbf{x})=\int P_{\mathbf{y}}(D') \mu(d\mathbf{y}).
\end{align}
Furthermore, the change of variables that maps $y_i$ to $x_i$ in the integrant above results
\begin{align}
\label{eq:iso3}
\int P_{\mathbf{y}}(D') \mu(d\mathbf{y})=\int P_{\mathbf{x}}(D') \mu(d\mathbf{x})=P(D'),
\end{align}
since the mapping is a permutation and the Jacobian of a permutation matrix is $\pm 1$.
Thus, the results in \eqref{eq:iso2} and \eqref{eq:iso3} together imply that $P(D)=P(D')$,
and so the desired result follows.
\end{proof}

As a corollary, we easily see that any VRD is isomorphism-invariant since every VRD is a VARD.

%%%%%%%%%%%%%%%%%  Section 3 %%%%%%%%%
\section{Inclusion/exclusion relations between ARDs, VRDs and VARDs}
\label{sec:3}
In the previous section we have shown that every ARD is a VARD and so is every VRD,
and every VARD is isomorphism-invariant.
In this section we prove that for $n\geq 4$ there exists no random digraph which is both ARD and VRD,
and the union of the classes ARDs and VRDs is not the entire class of VARDs.

The following theorem implies that the families ARDs and VRDs are disjoint for $n\geq 4$.
\begin{thm}
\label{thm:1}
If an ARD, $\mathbf{D}(n,p_a)$, with $n\geq 4$ is represented as a VARD, $\mathbf{D}(n,\Omega, \mu, \phi)$,
then $\phi (x,y)=p_a$ $\mu^2$-a.s.
\end{thm}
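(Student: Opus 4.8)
The plan is to use the fact that a VARD assigns to each digraph $D$ a probability given by an integral of products of $\phi(x_i,x_j)$ and $1-\phi(x_i,x_j)$, and to compare these with the explicit product formula for an ARD. The key observation is that in an ARD the probability of a digraph depends only on its number of arcs, and moreover the ``arc-indicator'' random variables are independent. I would first extract moment-type identities: for any fixed set $S$ of ordered pairs of distinct vertices, summing $P(D)$ over all $D$ containing $S$ in their arc set gives, on the ARD side, $p_a^{|S|}$, and on the VARD side, $\int \prod_{(i,j)\in S}\phi(x_i,x_j)\, d(\mu\mathbf{x})$. So for every such $S$,
\begin{align}
\label{eq:mom}
\int \prod_{(i,j)\in S}\phi(x_i,x_j)\, d(\mu\mathbf{x}) = p_a^{|S|}.
\end{align}

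Next I would exploit \eqref{eq:mom} for cleverly chosen $S$ to force $\phi$ to be $\mu^2$-a.s.\ constant. Taking $S=\{(1,2)\}$ gives $\int \phi(x_1,x_2)\,d(\mu\mathbf x)=p_a$, i.e.\ $\E[\phi(X_1,X_2)]=p_a$. The idea is then to compute a variance-like quantity and show it vanishes. Since $n\geq 4$, I have at least four vertices to play with, which lets me take $S=\{(1,2),(3,4)\}$ to get $\E[\phi(X_1,X_2)\phi(X_3,X_4)] = p_a^2$; because $X_1,X_2,X_3,X_4$ are i.i.d., the left side equals $\E[\phi(X_1,X_2)\phi(X_3,X_4)]$ where the two factors involve disjoint index sets, so this already says that $\E[g(X_1,X_2)]^2 = p_a^2$ is consistent but not yet enough. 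The real leverage comes from $S=\{(1,2),(1,3)\}$ (sharing the tail vertex $1$) and $S=\{(1,3),(2,3)\}$ (sharing the head), giving $\E[\phi(X_1,X_2)\phi(X_1,X_3)] = p_a^2$ and similarly for the head-sharing case. Writing $h(x) = \E[\phi(x,X_2)]$, the tail-sharing identity reads $\E[h(X_1)^2] = p_a^2 = \E[h(X_1)]^2$, which forces $h(X_1) = p_a$ $\mu$-a.s.; symmetrically the head-averaged function is a.s.\ constant. Then I would use $S=\{(1,2),(3,4),(1,3)\}$ or iterate to pin down $\phi$ itself: the cleanest route is to show $\E[\phi(X_1,X_2)^2] = p_a$ as well. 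For this, note $\phi$ takes values in $[0,1]$, so $\phi(x,y)^2 \leq \phi(x,y)$, hence $\E[\phi(X_1,X_2)^2]\leq p_a$; to get the reverse inequality I would consider configurations forcing multiplicities — but since $\phi$ need not be $\{0,1\}$-valued, I instead compute $\Var(\phi(X_1,X_2))$ directly. Using the tail- and head-sharing identities together with a third identity involving $S=\{(1,2),(1,3),(2,3),\dots\}$ on the four available vertices, I can express $\E[\phi(X_1,X_2)^2] - p_a^2$ as a sum of nonnegative terms that the identities force to be zero, yielding $\Var(\phi(X_1,X_2))=0$ and therefore $\phi(x,y)=p_a$ $\mu^2$-a.s.

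An alternative and perhaps more robust packaging: fix the arc $(1,2)$ and condition. For a VARD, $P(D \mid \text{arc-data on pairs not touching } \{1,2\})$ is itself an integral; comparing the conditional law of the single arc $(1,2)$ given the rest of the digraph in the ARD (where it is independent $\mathrm{Bernoulli}(p_a)$) with the VARD representation shows that $\phi(X_1,X_2)$ must be measurable with respect to the tail $\sigma$-algebra and constant. Concretely, I would show that for the ARD, conditioning on the arcs among $\{3,4,\dots,n\}$ and the arcs between $\{1,2\}$ and $\{3,\dots,n\}$ leaves $(1,2)$ and $(2,1)$ as independent $\mathrm{Bernoulli}(p_a)$'s, and in the VARD representation this conditional independence across the product structure forces $\phi(X_1,X_2)$ to equal its conditional expectation, which by the earlier step is $p_a$.

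The main obstacle I anticipate is handling the fact that $\phi$ is only $[0,1]$-valued rather than $\{0,1\}$-valued: one cannot simply argue at the level of which digraphs have positive probability (as was done earlier for symmetric $\phi$), so the argument must be genuinely $L^2$/variance-based, and the bookkeeping of which overlapping index patterns among the $n\geq 4$ vertices produce exactly the identities needed to kill the variance requires care. In particular, verifying that the relevant combination of moment identities from \eqref{eq:mom} is exactly a sum of squares (and hence that each square vanishes) is the crux; the rest — deducing $\phi=p_a$ $\mu^2$-a.s.\ from $\Var(\phi(X_1,X_2))=0$ together with $\E[\phi(X_1,X_2)]=p_a$ — is routine.
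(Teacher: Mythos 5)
Your setup (the multilinear moment identities, and the common-tail/common-head Cauchy--Schwarz step giving $\int\phi(x,y)\,d(\mu y)=p_a$ $\mu$-a.s.\ and $\int\phi(x,y)\,d(\mu x)=p_a$ $\mu$-a.s.) is correct, but the crux --- deducing $\Var(\phi(X_1,X_2))=0$ --- is where there is a genuine gap. Every identity you can extract from matching the ARD with the VARD is multilinear in the values $\phi(X_i,X_j)$: an ordered pair can occur at most once in the arc set $S$, so $\E[\phi(X_1,X_2)^2]$ is never an accessible quantity and cannot be written as a finite algebraic combination of the moments you list. Concretely, the identities you actually invoke (single arc, disjoint arcs, common tail, common head, the triangle-type sets, $\{(1,2),(3,4),(1,3)\}$, even the $2$-cycle $\{(1,2),(2,1)\}$) are all satisfied by non-constant kernels: take $\Omega=[0,1)$, $\mu$ uniform and $\phi(x,y)=p_a+\epsilon\sin(2\pi x)\sin(4\pi y)$ with $\epsilon\leq\min(p_a,1-p_a)$. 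Both marginal averages are identically $p_a$, and each of the listed moments factors into one-dimensional integrals at least one of which vanishes, so all those identities hold while $\Var(\phi(X_1,X_2))>0$. Hence the claim that these identities can be packaged as ``a sum of nonnegative terms forced to be zero'' fails for the configurations you chose; also the aside ``show $\E[\phi(X_1,X_2)^2]=p_a$'' points in the wrong direction, since the theorem forces $\E[\phi^2]=p_a^2<p_a$, so the reverse inequality you hope for is false in the non-degenerate case. The alternative ``conditioning'' packaging does not repair this: for fixed finite $n$ the $\sigma$-algebra generated by the remaining arcs is far too coarse to force $\phi(X_1,X_2)$ to equal its conditional expectation.

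What is missing is an argument at the level of two-variable kernels rather than one-variable averages, and that is exactly where the paper spends its effort: it sets $h(x,y)=\phi(x,y)\phi(y,x)$ (symmetric even though $\phi$ is not), expands the compact self-adjoint integral operator with kernel $h$ in eigenfunctions, and uses the $8$-arc event on four vertices consisting of the symmetric pairs $(1,2),(2,1),(2,3),(3,2),(3,4),(4,3),(4,1),(1,4)$ to get $\sum_i\lambda_i^4=p_a^8$ together with $p_a^2=\langle T\mathbf{1},\mathbf{1}\rangle\leq\lambda_1$; this forces $h$ to be rank one and, via equality in Cauchy--Schwarz, constant $=p_a^2$. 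Repeating with $(1-\phi(x,y))(1-\phi(y,x))$ gives $(1-p_a)^2$, and the two equations are solved for $\phi=p_a$; this is also where $n\geq4$ enters. If you prefer to stay inside your moment framework, the configuration that does the job is $S=\{(1,2),(3,2),(3,4),(1,4)\}$: by Fubini its probability equals $\int M(x_1,x_3)^2\,d(\mu x_1)d(\mu x_3)$ with $M(x_1,x_3)=\int\phi(x_1,y)\phi(x_3,y)\,d(\mu y)$, so equality with $p_a^4=\bigl(\int M\bigr)^2$ forces $M=p_a^2$ $\mu^2$-a.s.; combined with your marginal step this says the kernel of $T_fT_f^*$ (with $f=\phi-p_a$) vanishes a.e., whence $T_f^*=0$, $T_f=0$ and $f=0$ $\mu^2$-a.s. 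Either route requires this kind of operator/equality-in-Cauchy--Schwarz argument on a kernel in two variables, and your proposal does not supply it.
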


\begin{proof}
Suppose that an ARD, $\mathbf{D}(n,p_a)$, with $n\geq 4$ is represented as a VARD, $\mathbf{D}(n,\Omega, \mu, \phi)$.
For the proof of the theorem,
we borrow some tools from functional analysis which are presented in the proof of the Theorem 4.2. in \cite{beer:2011}.
Let $h: \Omega\times \Omega \rightarrow [0,1]$ be a symmetric measurable function
and $T$ be the integral operator with kernel $h$ on the space $L^2(\Omega,\mu)$ of $\mu$-square-integrable functions on $\Omega$:
\begin{align*}
(Tg)(x)=\int h(x,y)g(y)d(\mu y).
\end{align*}
Since $h$ is bounded and $\mu$ is a finite measure, the kernel $h$ is in $L^2(\mu \times \mu )$.
Integral operators with such kernels are Hilbert-Schmidt operators and are thus compact operators.
Moreover, as $h$ is symmetric, the integral operator $T$ is self-adjoint,
which implies that $L^2(\Omega,\mu)$ has an orthonormal basis $(\psi_i)_{i\geq 1}$ of eigenfunctions for $T$
such that $T\psi _i=\lambda_i \psi_i$ for not necessarily distinct real eigenvalues $\lambda_i$
with $\lambda_i\rightarrow 0$ as $i\rightarrow \infty$ (see Chapter VI in \cite{reedsimon:1980}).
We may assume that $\lambda_1$ is the largest eigenvalue.
Then we have
\begin{align*}
h(x,y)=\sum_{i\geq 1} \lambda_i \psi_i(x)\psi_i(y) ~~~ \mu^2\text{-a.s.}
\end{align*}
with the sum converging in $L^2$.
As $\psi_i$'s are orthonormal, it follows that
\begin{align}
\label{eq:3.1}
&\E(h(X_1,X_2)h(X_2,X_3)h(X_3,X_4)h(X_4,X_1)) \nonumber\\
&=\int \int \int \int h(x_1,x_2)h(x_2,x_3)h(x_3,x_4)h(x_4,x_1)d(\mu x_1)d(\mu x_2)d(\mu x_3)d(\mu x_4) \nonumber \\
&=\sum_{i\geq 1} \lambda_i^4.
\end{align}
Now let $E_1$ be the event that both $(1,2)$ and $(2,1)$ are in $A(\mathbf{D})$.
As $\mathbf{D}$ is an ARD $\mathbf{D}(n,p_a)$, it is easy to see that $P(E_1)=p_a^2$.
On the other hand, since $\mathbf{D}$ is represented as a VARD, $\mathbf{D}(n,\Omega, \mu, \phi)$, we have
\begin{align*}
P(E_1)=\E(\phi(X_1,X_2)\phi(X_2,X_1)).
\end{align*}
Thus, letting $h(x,y)=\phi(x,y)\phi(y,x)$ gives $p_a^2=\E(h(X_1,X_2))$.
As
\begin{align*}
\E(h(X_1,X_2))=\int \int h(x,y)d(\mu x)d(\mu y)=\langle T\mathbf{1}, \mathbf{1} \rangle\leq \lambda_1,
\end{align*}
we get $p_a^2\leq \lambda_1$, where $\mathbf{1}$ is the function with constant value 1.

Let $E_2$ be the event that
$(1,2),(2,1),(2,3),(3,2),(3,4),(4,3),(4,1),(1,4)\in A(\mathbf{D})$.
Since $\mathbf{D}$ is an ARD, $\mathbf{D}(n,p_a)$, it is easy to see that
\begin{align}
\label{eq:3.3}
P(E_2)=p_a^8.
\end{align}
By the representation of $\mathbf{D}$ as a VARD, $\mathbf{D}(n,\Omega, \mu, \phi)$, we also have
\begin{align}
\label{eq:3.4}
P(E_2)=\E(h(X_1,X_2)h(X_2,X_3)h(X_3,X_4)h(X_4,X_1)).
\end{align}
Now combining the results in \eqref{eq:3.1}, \eqref{eq:3.3} and \eqref{eq:3.4} gives
\begin{align}
\label{eq:3.5}
p_a^8=\sum_{i\geq 1}\lambda_i^4.
\end{align}
Since $p_a^2\leq \lambda_1$, we have $p_a^8\leq \lambda_1^4$ and thus, by \eqref{eq:3.5} we obtain that $\lambda_1=p_a^2$
and $\lambda_i=0$ for every $i\geq 2$, that is $h(x,y)=p_a^2\psi_1(x)\psi_1(y)$.
But then we have
\begin{align*}
p_a^2\int \psi_1^2(x)d(\mu x)=p_a^2=\E(h(X_1,X_2))
=p_a^2\int \int \psi_1(x)\psi_1(y) d(\mu x)d(\mu y)=p_a^2 \left ( \int \psi_1(x)d(\mu x) \right )^2,
\end{align*}
which implies that
\begin{align}
\label{eq:3.6}
\int \psi_1^2(x)d(\mu x)= \left ( \int \psi_1(x)d(\mu x) \right )^2,
\end{align}
since $p\neq 0$.
As the equality in equation \eqref{eq:3.6} is the equality in the Cauchy-Schwarz inequality for $\psi_1$ and $\mathbf{1}$,
we see that $\psi_1$ is constant $\mu$-a.s.
Since $\int \psi_1^2(x)d(\mu x)=1$, we get $\psi_1=1$ $\mu$-a.s. or $\psi_1=-1$ $\mu$-a.s.,
and therefore $h(x,y)=p_a^2$ $\mu^2$-a.s., that is
\begin{align}
\label{eq:3.7}
\phi(x,y)\phi(y,x)=p_a^2~~~ \mu^2 \text{-a.s.}
\end{align}
Next, let $E_3$ be the event that neither of the arcs $(1,2)$ and $(2,1)$ is in $A(\mathbf{D})$,
and $E_4$ be the event that none of the arcs $(1,2),(2,1),(2,3),(3,2),(3,4),(4,3),(4,1),(1,4)$ is in $A(\mathbf{D})$.
Choosing $h(x,y)$ to be $(1-\phi(x,y))(1-\phi(y,x))$ allows us to follow the same arguments above for $E_1$ and $E_2$
replaced with $E_3$ and $E_4$, respectively, and with $1-p_a$ taking place of $p_a$.
Therefore, we obtain that
\begin{align}
\label{eq:3.8}
(1-\phi(x,y))(1-\phi(y,x))=(1-p_a)^2 ~~~ \mu^2 \text{-a.s.}
\end{align}
Finally, the equations in \eqref{eq:3.7} and \eqref{eq:3.8} give the desired result.
\end{proof}

\begin{remark}
The function $h$ in the proof of Theorem \ref{thm:1} is taken to be symmetric.
Otherwise, the operator $T$ needs not to be self-adjoint and
hence the succeeding arguments in the proof are not true.
So, $h$ being symmetric is a crucial condition for the proof.
If $\phi$ is given to be symmetric,
one can take $h=\phi$ and obtain $\phi=p_a$ $\mu^2$-a.s. (as in the proof of Theorem 4.2. in \cite{beer:2011}).
However, in our case, $\phi$ is not supposed to be symmetric.
Note that we tackle this hurdle by taking $h(x,y)$ to be $\phi(x,y)\phi(y,x)$ and
$(1-\phi(x,y))(1-\phi(y,x))$, respectively, and obtain the desired result.
\end{remark}

As any VRD is a VARD with $\phi$ taking values in $\{0,1\}$, by Theorem \ref{thm:1} we have the following corollary.
\begin{cor}
\label{cor:ardnotvrd}
Any ARD, $\mathbf{D}(n,p_a)$, with $n\geq 4$ is not a VRD.
\end{cor}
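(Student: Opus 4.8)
The plan is to argue by contradiction, reducing the statement immediately to Theorem \ref{thm:1}. Suppose $\mathbf{D}(n,p_a)$ with $n\geq 4$ were a VRD; by Definition \ref{def:vrd} this means $\mathbf{D}(n,p_a)=\mathbf{D}(n,\Omega,\mu,\phi)$ for some probability space $(\Omega,\mathcal{F},\mu)$ and some measurable $\phi:\Omega\times\Omega\rightarrow\{0,1\}$. As observed just before Proposition \ref{prop:isoinv}, every VRD is a VARD: the same $\Omega$, $\mu$, and $\phi$ (now regarded as a $[0,1]$-valued function) exhibit $\mathbf{D}(n,\Omega,\mu,\phi)$ as a VARD. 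Hence this is, in particular, a representation of the ARD $\mathbf{D}(n,p_a)$ as a VARD.

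Next I would apply Theorem \ref{thm:1} to this representation, which forces $\phi(x,y)=p_a$ for $\mu^2$-almost every $(x,y)\in\Omega\times\Omega$. Since $\mu$ is a probability measure, $\mu^2$ is a probability measure on $\Omega\times\Omega$, so the set $\{(x,y):\phi(x,y)=p_a\}$ has $\mu^2$-measure $1$ and is therefore nonempty; pick any $(x_0,y_0)$ in it. Then $\phi(x_0,y_0)=p_a$, and by Definition \ref{def:ard} we have $0<p_a<1$, so $\phi(x_0,y_0)\notin\{0,1\}$, contradicting the requirement in Definition \ref{def:vrd} that $\phi$ take values only in $\{0,1\}$. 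Thus no such VRD representation of $\mathbf{D}(n,p_a)$ exists.

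I do not expect any real obstacle here, as the corollary is essentially a packaging of Theorem \ref{thm:1}. The only point that needs a word of care is that "$\phi=p_a$ $\mu^2$-a.s." is genuinely incompatible with $\phi$ being $\{0,1\}$-valued; this rests precisely on $0<p_a<1$ and on the fact that $\mu^2$, being a probability measure, assigns measure $0$ only to proper subsets of $\Omega\times\Omega$, so a full-measure set is nonempty. (One could alternatively phrase the contradiction without choosing a point: a $\{0,1\}$-valued $\phi$ that equals $p_a\in(0,1)$ on a set of positive $\mu^2$-measure is impossible.)
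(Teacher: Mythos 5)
Your proof is correct and follows essentially the same route as the paper: the paper also derives the corollary directly from Theorem \ref{thm:1} by noting that a VRD is a VARD with $\phi$ taking values in $\{0,1\}$, which is incompatible with $\phi=p_a\in(0,1)$ $\mu^2$-a.s. Your extra care in passing from ``a.s.\ equal to $p_a$'' to an actual contradiction (a full-measure set is nonempty) is a fine, if slightly more detailed, rendering of the same one-line argument.
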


We next show that union of ARD and VRD families do not constitute the entire class of VARDs when $n\geq 4$.
\begin{thm}
\label{thm:onlyvard}
There exist VARDs with $n\geq 4$ which are neither a VRD nor an ARD.
\end{thm}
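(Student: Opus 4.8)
The plan is to obtain such a VARD by transporting the corresponding separating example for random graphs through a ``mutual-arc'' operation. Fix $n\geq 4$. By the corresponding classification results for random graphs (\cite{beer:2011}), there is a probability space $(\Omega_0,\mathcal{F}_0,\mu_0)$ and a symmetric measurable $\phi_0:\Omega_0\times\Omega_0\to[0,1]$ for which the VERG $\mathbf{G}(n,\Omega_0,\mu_0,\phi_0)$ is neither a VRG nor an ERG; in particular $\phi_0$ is not $\mu_0^2$-a.s.\ constant, since otherwise that VERG would be an ERG. Set $\phi:=\sqrt{\phi_0}$, which is again symmetric, measurable and $[0,1]$-valued, and is not $\mu_0^2$-a.s.\ constant, and let $\mathbf{D}:=\mathbf{D}(n,\Omega_0,\mu_0,\phi)$ be the VARD it determines via Definition \ref{def:vard}.

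First I would dispose of the ARD case: since $\phi$ is not $\mu_0^2$-a.s.\ constant, Theorem \ref{thm:1} rules out any representation of $\mathbf{D}$ as an ARD. The substantive part is that $\mathbf{D}$ is not a VRD, and here the mutual-arc operation does the work. For a digraph $D$ on $[n]$, let $m(D)$ be the graph on $[n]$ in which $ij$ is an edge exactly when both $(i,j)\in A(D)$ and $(j,i)\in A(D)$, and for a random digraph let $m(\mathbf{D})$ denote the induced random graph. Two observations are needed. (i) From the VARD description of $\mathbf{D}$: conditionally on the sample $\mathbf{X}=(X_1,\dots,X_n)$ drawn from $\mu_0$, arcs are inserted independently, $(i,j)$ with probability $\phi(X_i,X_j)$, so $ij\in E(m(\mathbf{D}))$ with conditional probability $\phi(X_i,X_j)\phi(X_j,X_i)=\phi(X_i,X_j)^2=\phi_0(X_i,X_j)$, independently over unordered pairs; comparing with Definition \ref{def:verg}, $m(\mathbf{D})$ has exactly the law $\mathbf{G}(n,\Omega_0,\mu_0,\phi_0)$. (ii) If $\mathbf{D}$ could be written as a VRD $\mathbf{D}(n,\Omega',\mu',\psi)$ with $\psi$ taking values in $\{0,1\}$, then conditionally on the marks $ij\in E(m(\mathbf{D}))$ iff $\psi(X'_i,X'_j)=\psi(X'_j,X'_i)=1$, i.e.\ iff $h'(X'_i,X'_j)=1$, where $h'(x,y):=\psi(x,y)\psi(y,x)$ is symmetric, measurable and $\{0,1\}$-valued; hence $m(\mathbf{D})$ would have the law of the VRG $\mathbf{G}(n,\Omega',\mu',h')$ (Definition \ref{def:vrg}).

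Combining (i) and (ii): were $\mathbf{D}$ a VRD, the VERG $\mathbf{G}(n,\Omega_0,\mu_0,\phi_0)$ would coincide with a VRG, contradicting the choice of $\phi_0$; therefore $\mathbf{D}$ is not a VRD, and together with the previous step $\mathbf{D}$ is a VARD on $n\geq 4$ vertices that is neither a VRD nor an ARD. I expect the only real obstacle to be assembling this dictionary cleanly --- verifying that $m(\cdot)$ carries VRDs into VRGs and carries the chosen VARD onto exactly the witness of \cite{beer:2011} --- rather than any delicate estimate; the minor technical points are that $\sqrt{\phi_0}$ stays symmetric and $[0,1]$-valued and that non-constancy is unaffected by squaring. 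If one prefers to avoid \cite{beer:2011}, one can instead take a concrete symmetric two-valued kernel $\phi_0$ on a two-point space and, for $n$ large, argue directly in the spirit of Theorem \ref{thm:1}: matching the edge, triangle and cycle densities of $m(\mathbf{D})$ across the two purported representations and using that a $\{0,1\}$-valued kernel $h'$ satisfies $\operatorname{tr}(T_{h'}^2)=\int h'^2=\int h'$ pins down the spectrum of $T_{h'}$ and forces the contradiction $\sum_i\mu_i^2=\int h'$ against $\sum_i\mu_i^2=\operatorname{tr}(T_{\phi_0}^2)$; routing through \cite{beer:2011} is shorter and avoids any size restriction.
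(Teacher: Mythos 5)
Your mutual-arc transport is internally sound, but it has a genuine gap exactly where the theorem has content: the cases $n=4,5$. The witness you import from \cite{beer:2011} --- a VERG that is neither a VRG nor an ERG --- is only established there for $n\geq 6$ (the paper itself quotes this bound, and states that the point of this theorem is to \emph{lower} the bound from $6$ to $4$ by exploiting the non-symmetry available to digraphs). So the premise ``fix $n\geq 4$ and pick such a $\phi_0$'' is unsupported for $n=4,5$, and your construction only proves the statement for $n\geq 6$; your fallback sketch is explicitly ``for $n$ large'' and so has the same restriction. In other words, the argument reproves the easy range and misses the improvement the theorem is claiming. (For $n\geq 6$ your route is correct: conditionally on the sample, both arcs of a pair appear with probability $\sqrt{\phi_0}(X_i,X_j)\,\sqrt{\phi_0}(X_j,X_i)=\phi_0(X_i,X_j)$ independently over pairs, so the mutual-arc graph of your VARD is the VERG $\mathbf{G}(n,\Omega_0,\mu_0,\phi_0)$, while the mutual-arc graph of any VRD is a VRG with kernel $\psi(x,y)\psi(y,x)$, and Theorem \ref{thm:1} kills the ARD case.)

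The paper closes the $n=4,5$ gap with a digraph-specific witness rather than a lifted graph witness: fix $0<a<b<1$ and a non-symmetric kernel $\psi$ with $\psi(x,y)\in\{a,b\}$ and $\psi(x,y)\neq\psi(y,x)$ for $x\neq y$ (e.g.\ $\psi(x,y)=a\mathbf{1}_{\{x\leq y\}}+b\mathbf{1}_{\{y<x\}}$ with $\mu$ continuous), so that $\psi(x,y)\psi(y,x)=ab$ and $(1-\psi(x,y))(1-\psi(y,x))=(1-a)(1-b)$ off the diagonal. For any other VARD representation $\mathbf{D}(n,\Omega',\nu,\phi)$, the spectral argument in the proof of Theorem \ref{thm:1} (applied to the symmetric kernels $h=\phi(x,y)\phi(y,x)$ and $h=(1-\phi(x,y))(1-\phi(y,x))$, using the pair event and the $4$-cycle event, which needs only four vertices) forces $\phi(x,y)\phi(y,x)=ab$ and $(1-\phi(x,y))(1-\phi(y,x))=(1-a)(1-b)$ $\nu^2$-a.s. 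Since $ab\in(0,1)$, $\phi$ cannot be $\{0,1\}$-valued, so there is no VRD representation; and no constant $p$ satisfies both $p^2=ab$ and $(1-p)^2=(1-a)(1-b)$ when $a\neq b$ (strict AM--GM gives $\sqrt{ab}+\sqrt{(1-a)(1-b)}<1$), so there is no ARD representation. To repair your proof for all $n\geq 4$ you would need a witness of this non-symmetric kind; relying on the symmetric graph-level example cannot get below $n=6$.
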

\begin{proof}
Let $0<a<b<1$ be real numbers.
Consider a VARD, $\mathbf{D}(n, \Omega,\mu, \psi)$, with $n\geq 4$ such that
$\phi(x,y)\in \{a,b\}$, and $\psi(x,y)\neq \psi(y,x)$ for any $x\neq y$.
Equivalently, we have
\begin{align}
\label{eq:psiab}
\psi(x,y)\psi(y,x)=ab \text{ and } (1-\psi(x,y)) (1-\psi(y,x))=(1-a)(1-b) \text{ for } x\neq y.
\end{align}
For example, one can take $\Omega=\mathbb{R}$, $\mu$ to be a continuous distribution and $\psi(x,y)=a\mathbf{1}_{ \{x\leq y\} } + b\mathbf{1}_{ \{y< x\} }$.

Now suppose that it has another VARD representation $\mathbf{D}(n, \Omega', \nu,\phi)$.
We claim that $\phi$ satisfies the same properties of $\psi$ given in \eqref{eq:psiab} $\nu^2$-a.s.
Recall that in the proof of Theorem \ref{thm:1}, the properties of an ARD, $\mathbf{D}(n,p_a)$, that we used are
\begin{align}
\label{eq:ardprops}
P(E_2)=p_a^8=(p_a^2)^4=(P(E_1))^4 \text{ and }  P(E_4)=(1-p_a)^8=((1-p_a)^2)^4=(P(E_3))^4.
\end{align}
Notice that the equations in \eqref{eq:ardprops} hold for $\mathbf{D}(n, \Omega,\mu, \psi)$ when $p_a^2$ and $(1-p_a)^2$ are replaced with $ab$ and $(1-a)(1-b)$, respectively.
That is, for $\mathbf{D}(n, \Omega,\mu, \psi)$ we have
\begin{align*}
P(E_2)=(ab)^4=(P(E_1))^4 \text{ and }  P(E_4)=((1-a)(1-b))^4=(P(E_3))^4.
\end{align*}
Therefore, following the same arguments in the proof of Theorem \ref{thm:1} we obtain
\begin{align*}
\phi(x,y)\phi(y,x)=ab  \text{ and } (1-\phi(x,y))(1-\phi(y,x))=(1-a)(1-b)~~~ \nu^2 \text{-a.s.}
\end{align*}
and hence the claim follows.
Then, by the choice of $a,b$ and $\psi$, we see that $\mathbf{D}(n, \Omega,\mu, \psi)$ has neither an ARD nor a VRD representation.
\end{proof}

Recall that Corollary \ref{cor:ardnotvrd} and Theorem \ref{thm:onlyvard} imply that for $n\geq 4$
$\text{ARD}\cap \text{VRD}=\emptyset$ and $\text{VARD}\backslash (\text{ARD} \cup \text{VRD} ) \neq \emptyset$, respectively, in Figure \ref{fig:vards}.

\begin{figure}\center
\scalebox{.8}{
\begin{tikzpicture}[line cap=round,line join=round,>=triangle 45,x=1.5cm,y=1.5cm]
\clip(0.8537834428690544,0.8212498658964582) rectangle (9.162062220187407,6.526727615465726);
\draw (1.,6.)-- (9.,6.);
\draw (9.,6.)-- (9.,1.);
\draw (9.,1.)-- (1.,1.);
\draw (1.,6.)-- (1.,1.);
\draw [rotate around={0.:(5.,3.3)}] (5.,3.3) ellipse (5.4cm and 3.1cm);
\draw [rotate around={0.:(3.5,3.3)}] (3.5,3.3) ellipse (1.8cm and 1.3cm);
\draw [rotate around={0.:(6.5,3.3)}] (6.5,3.3) ellipse (1.8cm and 1.3cm);
%\draw [rotate around={0.:(5.0,3.3)}] (5.,3.3) ellipse (3.3cm and 2.3cm);
\draw (5,6.2) node[anchor=center] {Isomorphism-Invariant Random Digraphs};
\draw (5.,5.57) node[anchor=center] {VARD};
\draw (3.5,4.3) node[anchor=center] {ARD};
\draw (6.5,4.3) node[anchor=center] {VRD};
%\draw (5,5.) node[anchor=center] {DVERD};
\end{tikzpicture}}
\caption{Venn diagram of vertex-arc random digraphs for $n\geq 4$.
The results of this paper show that $\text{ARD}\cap \text{VRD} = \emptyset$ and all the four regions in the figure are nonempty for $n\geq 4$.
}
\label{fig:vards}
\end{figure}
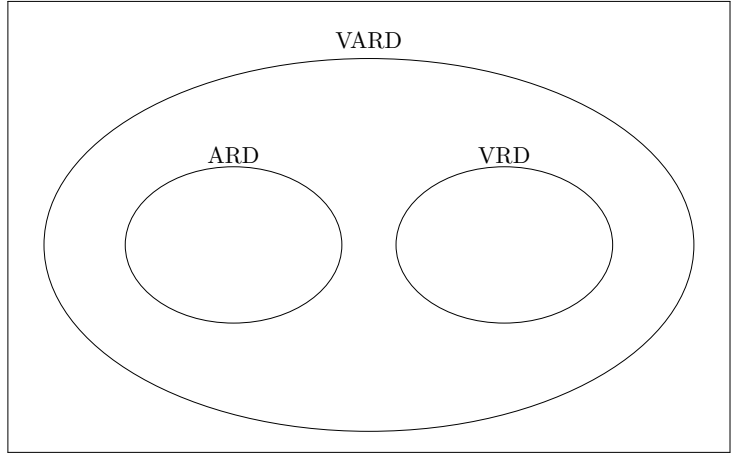

\begin{remark}
{\bf Approximation to VARDs by VRDs.}
However, any VARD can be arbitrarily closely approximated by VRDs.
That is, for any VARD $\mathbf{D}$ and $\epsilon>0$,
there exists a VRD $\mathbf{D}'$ such that $d_{ \text{TV} } (\mathbf{D},\mathbf{D}') < \epsilon$.
This result is a straightforward extension of approximation of VERGs by VRGs which immediately follows by letting
$\psi(y_1,y_2)$ to be the indicator of the event $\phi(x_1,x_2)\geq f_1(a_2)$ in the proof of Theorem 3.3 in \cite{beer:2011}.
\end{remark}

\section{Direction random digraphs}
\label{sec:drd}
One can also obtain isomorphism-invariant random digraphs by first generating an isomorphism-invariant random graph
and then assigning directions randomly to each edge.
Along this line, we first generate an isomorphism-invariant random graph, $\mathbf{G}= ( \mathcal{G}_n, P_{\mathbf{G}})$,
and then for each edge $ij\in E(\mathbf{G})$, independent of other edges,
pick a one sided or two sided direction randomly between $i$ and $j$.
For a given direction probability $1/2 \leq p_d <1$,
we put only the arc $(i,j)$ with probability $1-p_d$, only the arc $(j,i)$ with probability $1-p_d$ and
both of the arcs with probability $2p_d-1$.
Observe that the arc $(i,j)$ is put with probability $p_d$.
Also, note that we omit the case $p_d=1$ because it removes randomness in the direction.

The \emph{underlying graph} of a digraph $D$, denoted $U(D)$, is the graph obtained by replacing each arc of $D$ with an edge, disallowing multiple edges between two vertices (\cite{chartrand:1996}).
\begin{definition}
The \emph{underlying random graph} of a random digraph $\mathbf{D}= ( \mathcal{D}_n, P_{\mathbf{D}})$ is the random graph
$\mathbf{G}= ( \mathcal{G}_n, P_{\mathbf{G}})$ such that
\begin{align*}
P_{\mathbf{G}}(G)=\sum_{U(D)=G} P_{\mathbf{D}}(D) \text{ for every } G\in \mathcal{G}_n.
\end{align*}
\end{definition}

For instance, the underlying random graph of an ARD, $\mathbf{D}(n,p_a)$, is an ERG, namely,
$\mathbf{G}(n,p_e)$ with $p_e=2p_a-p_a^2$.
Moreover, notice also that the underlying random graph of a VARD, $\mathbf{D}(n, \Omega,\mu, \phi)$, is the VERG,
$\mathbf{G}(n, \Omega,\mu, \phi_u)$, where $\phi_u(x,y)=\phi(x,y)+\phi(y,x)-\phi(x,y)\phi(y,x)$.
In particular, the underlying random graph of a VRD is a VRG.

For a digraph $D\in \mathcal{D}_n$,
let $n_a(D)=|A(D)|$ and $n_e(D)=|E(U(D))|$ (i.e., the number of edges of the underlying graph of $D$).
Also, let $n_s(D)$ denote the number of pairs of vertices $i$ and $j$ such that both $(i,j)$ and $(j,i)$ are in $A(D)$
(i.e., the number of symmetric arcs in $D$),
and $n_{as}(D)$ denote the number of arcs $(i,j)$ in $A(D)$ with $(j,i) \notin A(D)$.
We write $n_a,n_e,n_s$ and $n_{as}$, respectively, dropping the digraph $D$ in the notation for brevity.
Note that $n_e=n_s+n_{as}$ and $n_a=2n_s+n_{as}$.

\begin{definition}\label{def:drd}
Let $\mathbf{G}= ( \mathcal{G}_n, P_{\mathbf{G}})$ be an isomorphism-invariant random graph and $1/2\leq p_d < 1$.
A \emph{direction random digraph} (DRD) is a random digraph $\mathbf{D} =( \mathcal{D}_n, P)$ with
\begin{align*}
P(D)=P_{\mathbf{G}}(U(D)) (1-p_d)^{n_{as}} (2p_d-1)^{n_s}  \text{ for every }  D\in \mathcal{D}_n ,
\end{align*}
and we say that $\mathbf{D}$ is \emph{generated by} $\mathbf{G}$ with \emph{direction probability} $p_d$.
\end{definition}

A natural question is why not start with a non-random graph and insert directions randomly to the edges to obtain DRDs.
There is a simple answer to the question.
Unfortunately, if directions are randomly inserted to the edges of a (fixed) graph,
the resulting random digraph is not isomorphism-invariant unless we start with an \emph{empty graph} (the graph with no edges) or a \emph{complete graph} (the graph with all possible edges).
Notice that $ \mathbf{G}$ is the underlying random graph of a DRD generated by $\mathbf{G}$.
Observe that if the digraphs $D_1$ and $D_2$ are isomorphic, then so are the (underlying) graphs $U(D_1)$ and $U(D_2)$,
and we also have $ n_s(D_1)=n_s(D_2)$ and $n_{as}(D_1)=n_{as}(D_2)$.
Thus, a DRD is isomorphism-invariant only if it is generated by an isomorphism-invariant random graph.
moreover, notice that we may consider a (fixed) graph as a degenerate random graph.
Also, it is easy to see that the empty graph and the complete graph with vertex set $[n]$ are the only graphs in $\mathcal{G}_n$ which are isomorphic to no other graph in $\mathcal{G}_n$,
and therefore these two graphs are the only isomorphism-invariant degenerate random graphs.

\subsection{DERDs, DVRDs and DVERDs}
We next provide three classes of direction random digraphs which are generated by ERGs, VRGs or VERGs.

\begin{definition}\label{def:derd}
The direction random digraph generated by an ERG, $\mathbf{G}(n,p_e)$, with direction probability $p_d$ is called
\emph{direction-edge random digraph} (DERD) and denoted $\mathbf{D}(n,p_e, p_d)$.
\end{definition}

Notice that letting $p_d$ to be 1/2 avoids symmetric arcs, and hence in the case of $p_d=1/2$,
after generating an ERG each edge is independently oriented in one of the two directions with equal probability
(e.g., see the model in \cite{Subramanian:2003}).
For example, letting $p_e=1$ and $p_d=1/2$ gives a \emph{random tournament} in which
each edge of a complete graph is independently oriented in one direction with equal probability.
For more information about tournaments, see \cite{moon:1968}.

\begin{definition}\label{def:dvrd}
A direction random digraph generated by a VRG is called \emph{direction-vertex random digraph} (DVRD).
A \emph{direction-vertex-edge random digraph} (DVERD) is a direction random digraph generated by a VERG.
\end{definition}

Notice that the underlying random graphs of a DERD, a DVRD and a DVERD are an ERG, a VRG and a VERG, respectively.
Clearly any ERG or VRG is a VERG, and hence every DERD and DVRD has a DVERD representation.
In addition, the results in \cite{beer:2011} imply the following:
A non-degenerate DRD which is both a DERD and a DVRD is either with $n\leq 3$ or generated by an ERG, $\mathbf{G}(n,p_e)$, with $p_e=1$.
For every $n\geq 6$, there exist DVERDs which are neither DERDs nor DVRDs.
Moreover, for $n\geq 3$, there exist DRDs which are not among DVERDs, and
for $n\leq 3$, any DVERD is also a DVRD.
These results are illustrated in Figure \ref{fig:drds}.

\begin{remark}
{\bf Approximation to DVERDs by DVRDs.}
Let $\mathbf{D}= ( \mathcal{D}_n, P_{\mathbf{D}})$ be a DVERD generated by a VERG,
$\mathbf{G}= ( \mathcal{G}_n, P_{\mathbf{G}})$, with direction probability $p_d$.
By Theorem 3.3 in \cite{beer:2011}, for any $\epsilon >0$ there exists a VRG, $\mathbf{G}'=( \mathcal{G}_n, P_{\mathbf{G}'})$, satisfying $d_{ \text{TV}}(\mathbf{G}, \mathbf{G}') < \epsilon$.
Let $\mathbf{D}'= ( \mathcal{D}_n, P_{\mathbf{D}'})$ be the DVRD generated by $\mathbf{G}'$ with the same direction probability $p_d$.
Then, it is easy to see that
\begin{align*}
\sum_{U(D)=G} |P_{\mathbf{D}}(D) -P_{\mathbf{D}'}(D)|  = | P_{\mathbf{G}}(G)-P_{\mathbf{G}'}(G)|
\end{align*}
for every $G\in \mathcal{G}_n$,
and therefore we get $d_{ \text{TV}}(\mathbf{D}, \mathbf{D}')=d_{ \text{TV}}(\mathbf{G}, \mathbf{G}')$
which implies that the total deviation distance between $\mathbf{D}$ and $\mathbf{D}'$ is less than $\epsilon$.
\end{remark}

\begin{figure}\center
\scalebox{.8}{
\begin{tikzpicture}[line cap=round,line join=round,>=triangle 45,x=1.5cm,y=1.5cm]
\clip(0.8537834428690544,0.8212498658964582) rectangle (9.162062220187407,6.526727615465726);
\draw (1.,6.)-- (9.,6.);
\draw (9.,6.)-- (9.,1.);
\draw (9.,1.)-- (1.,1.);
\draw (1.,6.)-- (1.,1.);
\draw [rotate around={0.:(5.,3.3)}] (5.,3.3) ellipse (5.4cm and 3.1cm);
\draw [rotate around={0.:(4.3,3.3)}] (4.3,3.3) ellipse (1.5cm and 1.3cm);
\draw [rotate around={0.:(5.7,3.3)}] (5.7,3.3) ellipse (1.5cm and 1.3cm);
\draw [rotate around={0.:(5.0,3.3)}] (5.,3.3) ellipse (3.3cm and 2.3cm);
\draw (5,6.2) node[anchor=center] {Isomorphism-Invariant Random Digraphs};
\draw (5.,5.57) node[anchor=center] {DRD};
\draw (4.3,4.3) node[anchor=center] {DERD};
\draw (5.7,4.3) node[anchor=center] {DVRD};
\draw (5,5.) node[anchor=center] {DVERD};
\end{tikzpicture}}
\caption{Venn diagram of direction random digraphs.
The results of the paper imply that all the six regions in the figure are nonempty.
In particular, the region $\text{DERD} \cap \text{DVRD}$ only consists of DRDs with $n\leq 3$ and DRDs generated by $\mathbf{G}(n,p_e=1)$.
}
\label{fig:drds}
\end{figure}
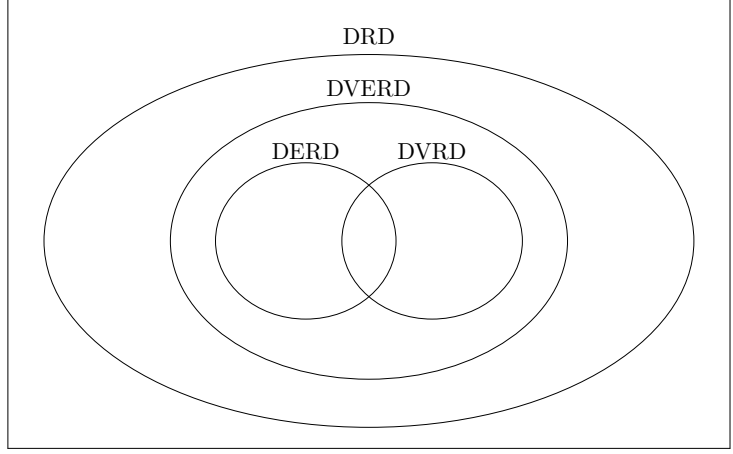

\section{Inclusion/exclusion relations of DERDs with respect to VARDs}
\label{sec:derdvard}
In this section, for $n\geq 4$, we show that a random digraph is both a DERD and a VARD if and only if it is an ARD, and
any DERD with $n\leq 3$ is also a VARD.
\begin{prop}
\label{prop:derdandard}
A DERD, $\mathbf{D}(n,p_e,p_d)$, is an ARD, $\mathbf{D}(n,p_a)$, if and only if
\begin{align*}
p_d=\frac{1}{1+\sqrt{1-p_e}} \text{  and  } p_a=1- \sqrt{1-p_e}.
\end{align*}
\end{prop}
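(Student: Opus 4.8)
The plan is to compare the two probability measures on a handful of explicit digraphs to pin down the parameters (the ``only if'' direction), and then to verify the resulting identity on all of $\mathcal{D}_n$ by an elementary exponent-matching computation (the ``if'' direction). Combining Definitions~\ref{def:drd} and~\ref{def:erg}, a DERD $\mathbf{D}(n,p_e,p_d)$ assigns to $D\in\mathcal{D}_n$, which has $n_e=n_s+n_{as}$ edges in $U(D)$, the probability
\[
P(D)=p_e^{\,n_s+n_{as}}\,(1-p_e)^{{n\choose 2}-n_s-n_{as}}\,(1-p_d)^{n_{as}}\,(2p_d-1)^{n_s},
\]
whereas an ARD $\mathbf{D}(n,p_a)$ assigns, using $n_a=2n_s+n_{as}$ and $n(n-1)=2{n\choose 2}$,
\[
P(D)=p_a^{\,2n_s+n_{as}}\,(1-p_a)^{2{n\choose 2}-2n_s-n_{as}}.
\]

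For the ``only if'' direction, suppose the DERD coincides with the ARD $\mathbf{D}(n,p_a)$, so in particular $0<p_a<1$. Evaluate both measures at three digraphs, each available for every $n\ge 2$: the empty digraph ($n_s=n_{as}=0$), a digraph with a single asymmetric arc $\{(1,2)\}$ ($n_s=0,\ n_{as}=1$), and a digraph with a single symmetric pair $\{(1,2),(2,1)\}$ ($n_s=1,\ n_{as}=0$). The empty digraph gives $1-p_e=(1-p_a)^2$, hence $p_a=1-\sqrt{1-p_e}$ and in particular $p_e\in(0,1)$; equivalently $p_e=p_a(2-p_a)$. Dividing the equation for the single asymmetric arc by that of the empty digraph gives $p_e(1-p_d)/(1-p_e)=p_a/(1-p_a)$; substituting $1-p_e=(1-p_a)^2$ and $p_e=p_a(2-p_a)$ and cancelling $p_a$ yields $1-p_d=(1-p_a)/(2-p_a)$, i.e. $p_d=1/(2-p_a)=1/(1+\sqrt{1-p_e})$, as claimed. (The third digraph yields a further relation between the parameters that holds automatically under the standing assumption that the two measures agree, so it is not needed here.)

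For the ``if'' direction, assume $p_d=1/(1+\sqrt{1-p_e})$ and $p_a=1-\sqrt{1-p_e}$; since $p_a\in(0,1)$ forces $p_e\in(0,1)$, one also gets $p_d\in(1/2,1)$, so $\mathbf{D}(n,p_e,p_d)$ is a bona fide DERD. From $1+\sqrt{1-p_e}=2-p_a$ and $1-p_e=(1-p_a)^2$ we read off $p_e=p_a(2-p_a)$, $p_d=1/(2-p_a)$, and $2p_d-1=p_a/(2-p_a)$, whence the three ``base'' identities
\[
p_e(1-p_d)=p_a(1-p_a),\qquad p_e(2p_d-1)=p_a^2,\qquad 1-p_e=(1-p_a)^2 .
\]
Grouping the DERD formula for $P(D)$ as $\bigl(p_e(2p_d-1)\bigr)^{n_s}\bigl(p_e(1-p_d)\bigr)^{n_{as}}(1-p_e)^{{n\choose 2}-n_s-n_{as}}$ and substituting these identities collapses it exactly to $p_a^{2n_s+n_{as}}(1-p_a)^{2{n\choose 2}-2n_s-n_{as}}$, the ARD formula, for every $D\in\mathcal{D}_n$. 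Hence the two random digraphs are equal.

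I expect the only real care to be bookkeeping: keeping $n_e=n_s+n_{as}$, $n_a=2n_s+n_{as}$ and $n(n-1)=2{n\choose 2}$ straight, and checking that the three test digraphs indeed realize the pairs $(n_s,n_{as})\in\{(0,0),(0,1),(1,0)\}$ whenever $n\ge 2$. Unlike Theorem~\ref{thm:1}, no functional-analytic input is needed: this is a finite, elementary identity between product measures, so nothing here should be a genuine obstacle.
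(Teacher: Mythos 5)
Your proof is correct and follows essentially the same two-step plan as the paper's: pin down $(p_a,p_d)$ from a couple of probability comparisons, then verify the full identity $P(D)=p_a^{n_a}(1-p_a)^{n(n-1)-n_a}$ by exactly the same exponent bookkeeping with $n_e=n_s+n_{as}$ and $n_a=2n_s+n_{as}$. The only (harmless) difference is in the ``only if'' step: the paper equates the marginal probabilities $P((1,2)\in A(\mathbf{D}))=p_ep_d=p_a$ and $P(\{(1,2),(2,1)\}\subset A(\mathbf{D}))=p_e(2p_d-1)=p_a^2$ and solves the resulting quadratic for $p_d$, discarding the root exceeding $1$, whereas you equate the two measures on three explicit digraphs (empty, one asymmetric arc, one symmetric pair) and take ratios, which bypasses the quadratic and disposes of the $p_e=1$ case automatically via $1-p_e=(1-p_a)^2$ and $p_a<1$.
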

\begin{proof}
Suppose that $\mathbf{D}(n,p_e,p_d)$ is an ARD $\mathbf{D}(n,p_a)$.
Then we have $p_ep_d=p_a$ since both are $P((1,2)\in A(\mathbf{D}))$.
Similarly we have $p_e(2p_d-1)=p_a^2$ as both are $P( \{ (1,2), (2,1) \} \subset A(\mathbf{D}))$.
Solving these two equations gives $p_d=(1\pm \sqrt{1-p_e})/p_e$.
If $p_e=1$, then definitely $p_d=1$.
Otherwise, $(1- \sqrt{1-p_e})/p_e <1<(1+ \sqrt{1-p_e})/p_e$ and hence $p_d=(1- \sqrt{1-p_e})/p_e$.
Note that $(1- \sqrt{1-p_e})/p_e= 1/(1+ \sqrt{1-p_e})$ and so $1/2\leq p_d \leq 1$.
Finally, $p_a=p_e p_d=1- \sqrt{1-p_e}$.

We next show that whenever $p_d=1/(1+\sqrt{1-p_e})$ and $p_a=1- \sqrt{1-p_e}$,
$\mathbf{D}(n,p_e,p_d)$ is $\mathbf{D}(n,p_a)$.
Note that in that case, $1-p_d=p_a(1-p_a)/p_e$, $2p_d-1=p_a^2/p_e$ and $1-p_e=(1-p_a)^2$.
Therefore, for a given $D\in \mathcal{D}_n$ we have
\begin{align*}
P(D)&=p_e^{n_e} (1-p_e)^{{n\choose 2}-n_e} (1-p_d)^{n_{as}} (2p_d-1)^{n_s}\\
&=p_e^{n_e} (1-p_e)^{(n(n-1)-2n_e)} \frac{p_a^{n_{as}}(1-p_a)^{n_{as}}}{p_e^{n_{as}}} \frac{p_a^{2n_s}}{p_e^{n_s}}\\
&=p_e^{n_e-n_{as}-n_s}  p_a^{n_{as}+2n_s} (1-p_a)^{n(n-1)-2n_e+n_{as}}\\
&=p_a^{n_a} (1-p_a)^{n(n-1)-n_a},
\end{align*}
since $n_e=n_{as}+n_s$ and $n_a=n_{as}+2n_s$.
Thus, the desired result follows.
\end{proof}

In fact, for $n\geq4$, the family of ARDs is the intersection of the classes DERDs and VARDs.
\begin{thm}
\label{thm:derdandvard}
If a DERD, $\mathbf{D}(n,p_e,p_d)$, with $n\geq 4$  has a VARD representation $\mathbf{D}(n, \Omega,\mu, \phi)$,
then $p_d = 1/(1+ \sqrt{1-p_e})$ and $\phi(x,y)=p_ep_d\ $ $\mu^2$-a.s.
\end{thm}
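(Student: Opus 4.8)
The plan is to combine the two earlier results in a fairly direct way. First I would observe that a DERD $\mathbf{D}(n,p_e,p_d)$ is, in particular, a VARD: indeed the underlying random graph of a DERD is the ERG $\mathbf{G}(n,p_e)$, which is a VERG, and a direction random digraph generated by a VERG is a VARD once one checks that the per-pair insertion probabilities $p_a(i,j)=\phi(x_i,x_j)$ can be chosen appropriately—more concretely, a DERD is generated arc-by-arc independently across distinct ordered pairs, so taking $\Omega=\{*\}$ a one-point space and $\phi(*,*)$ to be the arc probability $p_e p_d$ on one direction, while handling the symmetric-arc probability $2p_d-1$, realizes it as a GARD; but GARDs need not be VARDs in general, so I would instead argue directly that the DERD law is exchangeable in the right way. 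The cleanest route is: a DERD is isomorphism-invariant (it is generated by an isomorphism-invariant random graph, as noted after Definition \ref{def:drd}), hence it lies in the class of VARDs precisely when it has \emph{some} VARD representation, which is the hypothesis. So I do not need to prove membership; I only need to exploit the given VARD representation $\mathbf{D}(n,\Omega,\mu,\phi)$ together with the DERD structure.

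Next I would run the argument of Theorem \ref{thm:1} essentially verbatim. The key point there was that the only facts about an ARD $\mathbf{D}(n,p_a)$ used were the two moment identities $P(E_1)=p_a^2$, $P(E_2)=p_a^8=(P(E_1))^4$ for the ``both-directions'' events, and symmetrically $P(E_3)=(1-p_a)^2$, $P(E_4)=(1-p_a)^8=(P(E_3))^4$ for the ``no-arc'' events, where $E_1,E_3$ concern the single pair $\{1,2\}$ and $E_2,E_4$ concern the $4$-cycle $1$–$2$–$3$–$4$–$1$. For a DERD, the events $E_1$ (both $(1,2)$ and $(2,1)$ present) and $E_3$ (neither present) are independent across distinct vertex pairs, because in a DERD each edge of the underlying ERG is present independently and each present edge is oriented independently. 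Hence $P(E_1)=q$ and $P(E_2)=q^4$ where $q:=P(\{(1,2),(2,1)\}\subseteq A(\mathbf{D}))=p_e(2p_d-1)$, and similarly $P(E_3)=r$, $P(E_4)=r^4$ with $r:=P(\text{no arc between }1,2)=1-p_e$. So the hypothesis of Theorem \ref{thm:1}'s computation is met with $p_a^2$ replaced by $q$ and $(1-p_a)^2$ replaced by $r$. Following those arguments with $h(x,y)=\phi(x,y)\phi(y,x)$ and then $h(x,y)=(1-\phi(x,y))(1-\phi(y,x))$ yields
\begin{align*}
\phi(x,y)\phi(y,x)=p_e(2p_d-1)\quad\text{and}\quad (1-\phi(x,y))(1-\phi(y,x))=1-p_e\qquad \mu^2\text{-a.s.}
\end{align*}
Adding these two $\mu^2$-a.s.\ identities and simplifying gives $\phi(x,y)+\phi(y,x)=2p_ep_d$ $\mu^2$-a.s. (using $(2p_d-1)p_e+(1-p_e)=1-2p_e(1-p_d)=1-2p_e+2p_ep_d$, so $\phi(x,y)+\phi(y,x)=1-(1-p_e)+p_e(2p_d-1)$ expands to $2p_ep_d$). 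Combined with the product identity $\phi(x,y)\phi(y,x)=p_e(2p_d-1)$, the values $\phi(x,y)$ and $\phi(y,x)$ are the two roots of $t^2-2p_ep_d\,t+p_e(2p_d-1)=0$; its discriminant is $4p_e^2p_d^2-4p_e(2p_d-1)$, so the roots are $p_ep_d\pm\sqrt{p_e^2p_d^2-p_e(2p_d-1)}$.

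Finally I would pin down $p_d$. The first moment $P((1,2)\in A(\mathbf{D}))=p_ep_d$ must equal $\E(\phi(X_1,X_2))$, and by the symmetry of $\mu^2$ this also equals $\E(\phi(X_2,X_1))$, hence equals $\tfrac12\E(\phi(X_1,X_2)+\phi(X_2,X_1))=p_ep_d$, which is automatically consistent and so gives no new information directly; instead I would use the representation to compute $P(\{(1,2),(2,3)\}\subseteq A(\mathbf{D}),\ (2,1),(3,2)\notin A(\mathbf{D}))$ — a ``directed path'' type event on three vertices — both from the DERD (where it factors as $(p_e(1-p_d))^2$, i.e.\ two independently one-way-oriented edges) and from the VARD representation as $\E(\phi(X_1,X_2)(1-\phi(X_2,X_1))\phi(X_2,X_3)(1-\phi(X_3,X_2)))$. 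Matching, and using the already-established $\mu^2$-a.s.\ product identities to rewrite $\phi(X_1,X_2)(1-\phi(X_2,X_1))$ in terms of a single function of $(X_1,X_2)$, should force the discriminant above to vanish, i.e.\ $p_e^2p_d^2=p_e(2p_d-1)$, which rearranges to $p_ep_d^2-2p_d+1=0$, whence $p_d=(1\pm\sqrt{1-p_e})/p_e=1/(1+\sqrt{1-p_e})$ (the root in $[1/2,1)$, exactly as in Proposition \ref{prop:derdandard}), and then $\phi(x,y)=p_ep_d$ $\mu^2$-a.s. Alternatively, once one knows $\phi(x,y)+\phi(y,x)=2p_ep_d$ a.s.\ and $\phi(x,y)\phi(y,x)=p_e(2p_d-1)$ a.s., one can test whether $\phi$ can be genuinely asymmetric on a positive-measure set by computing a third ``four-cycle'' functional that mixes $\phi(X_i,X_j)$ and $1-\phi(X_j,X_i)$ around the cycle and comparing with the DERD value $(p_e(1-p_d))^4$; the Hilbert–Schmidt/eigenvalue extremality argument then forces the relevant kernel to be constant, collapsing the two roots.

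The main obstacle I anticipate is the very last step: deducing $p_d=1/(1+\sqrt{1-p_e})$ rather than just the two $\mu^2$-a.s.\ product relations. The product relations alone are consistent with an asymmetric $\phi$ (as Theorem \ref{thm:onlyvard} shows for the $ab$-analogue), so ruling that out here genuinely uses that a DERD's one-way orientation probabilities on the two directions of an edge are \emph{equal} (both $p_e(1-p_d)$), which is the DERD-specific input absent in Theorem \ref{thm:onlyvard}. Making that ``equal one-way probabilities'' force the discriminant to vanish — equivalently, forcing $\phi$ to be constant — is the delicate part, and I expect it to require one more application of the self-adjoint-operator extremality argument (with an appropriately chosen symmetric kernel $h$ built from $\phi$ and $1-\phi$), analogous to but slightly more involved than the two applications already used.
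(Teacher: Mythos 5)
Your first half is essentially the paper's argument: exploiting independence across distinct vertex pairs in a DERD and running the machinery of Theorem \ref{thm:1} on the symmetric kernels $\phi(x,y)\phi(y,x)$ and $(1-\phi(x,y))(1-\phi(y,x))$ indeed yields $\phi(x,y)\phi(y,x)=p_e(2p_d-1)$ and $(1-\phi(x,y))(1-\phi(y,x))=1-p_e$ $\mu^2$-a.s., hence $\phi(x,y)+\phi(y,x)=2p_ep_d$ $\mu^2$-a.s., and the theorem reduces, as you say, to showing the two roots of the resulting quadratic coincide, i.e.\ $\phi=p_ep_d$ a.s. The gap is in your concrete plan for this last step. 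Setting $c=p_e(2p_d-1)$, the a.s.\ identities give $\phi(X_1,X_2)\bigl(1-\phi(X_2,X_1)\bigr)=\phi(X_1,X_2)-c$, so your directed-path probability equals
\begin{align*}
\E\bigl[(\phi(X_1,X_2)-c)(\phi(X_2,X_3)-c)\bigr]
=\E\bigl[\phi(X_1,X_2)\phi(X_2,X_3)\bigr]-2c\,p_ep_d+c^2 .
\end{align*}
But $\E[\phi(X_1,X_2)\phi(X_2,X_3)]=P(\{(1,2),(2,3)\}\subset A(\mathbf{D}))=(p_ep_d)^2$ holds automatically, since both sides are probabilities of the same event for the same random digraph; hence the display equals $(p_ep_d-c)^2=(p_e(1-p_d))^2$ identically, whether or not $\phi$ is constant. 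Matching this event therefore gives no information and cannot force the discriminant to vanish.

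What is missing is exactly the content of the paper's second half. From $P(\{(1,2),(1,3)\}\subset A(\mathbf{D}))=(p_ep_d)^2$ one is in the equality case of the Cauchy--Schwarz inequality, which gives $\int\phi(x,y)\,d(\mu y)=p_ep_d$ $\mu$-a.s.; from the four-vertex event $\{(1,2),(2,3),(1,4),(4,3)\}$ one similarly gets $\int\phi(x,y)\phi(y,z)\,d(\mu y)=(p_ep_d)^2$ $\mu^2$-a.s. Then $s(x,y)=i(\phi(x,y)-p_ep_d)$ is Hermitian by the a.s.\ sum identity, the associated compact integral operator is self-adjoint, and the two integral identities give $\int s(x,y)s(y,z)\,d(\mu y)=0$ $\mu^2$-a.s., which kills every eigenvalue, so $s=0$, i.e.\ $\phi=p_ep_d$ $\mu^2$-a.s.; the product identity then forces $(p_ep_d)^2=p_e(2p_d-1)$, i.e.\ $p_d=1/(1+\sqrt{1-p_e})$. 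Your fallback ``four-cycle of one-way factors'' idea can be salvaged, but only after the marginal constancy above is established: writing $\phi-c=p_e(1-p_d)+w$ with $w(x,y)=\phi(x,y)-p_ep_d$ antisymmetric a.s., the expansion of the cycle expectation contains the negative cross term $-4\,(p_e(1-p_d))^2\,\E[g(X)^2]$, where $g(y)=\int w(x,y)\,d(\mu x)$, which can cancel against the nonnegative term $\E[w(X_1,X_2)w(X_2,X_3)w(X_3,X_4)w(X_4,X_1)]$; so without first proving $g=0$ a.s.\ the extremality argument does not close as stated. Finally, your opening claim that a DERD ``is in particular a VARD'' should be dropped: the theorem itself shows this fails unless $p_d=1/(1+\sqrt{1-p_e})$ (cf.\ Proposition \ref{prop:derdandard}); as you note, only the hypothesized representation is needed.
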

\begin{proof}
Suppose $\mathbf{D}(n,p_e,p_d)$ has a VARD representation $\mathbf{D}(n, \Omega,\mu, \phi)$.
Note that, as in any ARD, the events $(i,j)\in A(\mathbf{D})$ and $(k,l)\in A(\mathbf{D})$ are independent in a DERD whenever $\{i,j\}\neq \{k,l\}$.
Therefore, one can apply the method used in the proof of Theorem \ref{thm:1} and obtain
\begin{align}
\label{eq:derd1}
\phi(x,y) \phi(y,x)=p_e(2p_d-1) ~~~ \mu^2 \text{-a.s.}
\end{align}
and
\begin{align}
\label{eq:derd2}
(1-\phi(x,y))(1-\phi(y,x))=1-p_e ~~~ \mu^2 \text{-a.s.}
\end{align}
Solving the equations in \eqref{eq:derd1} and \eqref{eq:derd2} yields
\begin{align}
\phi(x,y)+\phi(y,x)=2p_ep_d ~~~ \mu^2 \text{-a.s.} \label{eq:derdsum}
\end{align}
and
\begin{align}
\phi (x,y)=p_ep_d \pm \sqrt{(1-\sqrt{1-p_e}-p_e p_d) (1+\sqrt{1-p_e}-p_e p_d)}~~~ \mu^2 \text{-a.s.} \label{eq:derd3}
\end{align}
If $p_d > 1/(1+\sqrt{1-p_e})=(1-\sqrt{1-p_e})/p_e$,
then the numbers in the right-hand side of \eqref{eq:derd3} have imaginary parts,
and hence we get a contradiction since $\phi$ takes only real values.

Suppose $p_d \leq 1/(1+\sqrt{1-p_e})$.
In any VARD, $\mathbf{D}(n,\Omega,\mu,\phi)$, with $n\geq 3$ we have
\begin{align}
P(\{ (1,2),(1,3)\}\subset A(\mathbf{D}))&=\int \int \int \phi(x_1,x_2)\phi(x_1,x_3)d(\mu x_1)d(\mu x_2)d(\mu x_3)  \nonumber \\
&=\int \left( \int \phi(x_1,x_2)d(\mu x_2) \right ) \left( \int \phi(x_1,x_3)d(\mu x_3) \right )d(\mu x_1) \nonumber \\
&=\int \left( \int \phi(x_1,x_2)d(\mu x_2) \right )^2 d(\mu x_1) \nonumber \\
&\geq \left ( \int \int \phi(x_1,x_2)d(\mu x_1)d(\mu x_2) \right )^2=\left (P((1,2)\in A(\mathbf{D})) \right )^2 \label{eq:derd4}
\end{align}
by  Fubini's theorem and
the Cauchy-Schwarz inequality applied to the constant function $\mathbf{1}$ and $\int \phi(x_1,x_2)d(\mu x_2)$.
On the other hand, in any DERD, $\mathbf{D}(n,p_e,p_d)$, with $n\geq 3$ we have
\begin{align*}
P( \{ (1,2),(1,3)\} \subset A(\mathbf{D}))=(p_e p_d)^2=\left (P((1,2)\in A(\mathbf{D})) \right )^2.
\end{align*}
Therefore, we have the equality in the Cauchy-Schwarz inequality in \eqref{eq:derd4}.
Thus, $\int \phi(x_1,x_2)d(\mu x_2)=c$ $\mu$-a.s. for some constant $c$.
Since
\begin{align*}
p_ep_d=P((1,2)\in A(\mathbf{D}))=\int \int \phi(x_1,x_2)d(\mu x_2)d(\mu x_1)=\int c\ d(\mu x_1)=c,
\end{align*}
we obtain $c=p_ep_d$, that is,
\begin{align}
\int \phi(x,y)d(\mu y)=p_ep_d \ \ \mu \text{-a.s.} \label{eq:derdint1}
\end{align}

Similarly, in a VARD, $\mathbf{D}(n,\Omega,\mu,\phi)$, with $n\geq 4$ we have
\begin{align}
&P( \{ (1,2),(2,3), (1,4), (4,3)\} \subset A(\mathbf{D})) \nonumber \\
&=\int \int \int \int \phi(x_1,x_2)\phi(x_2,x_3) \phi(x_1,x_4)\phi(x_4,x_3)  d(\mu x_1)d(\mu x_2)d(\mu x_3) d(\mu x_4) \nonumber \\
&=\int \int \left( \int \phi(x_1,x_2) \phi(x_2,x_3)d(\mu x_2) \right ) \left( \int \phi(x_1,x_4) \phi(x_4,x_3)d(\mu x_4) \right )d(\mu x_1) d(\mu x_3) \nonumber \\
&=\int \int  \left( \int \phi(x_1,x_2)\phi(x_2,x_3)d(\mu x_2) \right )^2 d(\mu x_1) d(\mu x_3) \nonumber \\
&\geq \left ( \int \int \int \phi(x_1,x_2) \phi(x_2,x_3) d(\mu x_1)d(\mu x_2)d(\mu x_3) \right )^2
=\left (P( \{ (1,2),(2,3)\} \subset A(\mathbf{D})) \right )^2  \label{eq:derd5}
\end{align}
by  Fubini's theorem and
the Cauchy-Schwarz inequality applied to the constant function $\mathbf{1}$ and $\int \phi(x_1,x_2)\phi(x_2,x_3)d(\mu x_2) $. Since in a DERD, $\mathbf{D}(n,p_e,p_d)$, with $n\geq 4$ we have
\begin{align*}
P( \{ (1,2),(2,3), (1,4), (4,3)\} \subset A(\mathbf{D}))=(p_e p_d)^4=\left (P( \{(1,2), (2,3)\} \subset A(\mathbf{D})) \right )^2,
\end{align*}
we obtain $\int \phi(x_1,x_2)\phi(x_2,x_3)d(\mu x_2)$ is constant $ \mu^2$-a.s.
by the equality in Cauchy-Schwarz inequality in \eqref{eq:derd5}.
By the equality in \eqref{eq:derd5}, one can easily verify that
\begin{align}
\int \phi(x,y)\phi(y,z)d(\mu y)=(p_ep_d)^2\ \ \mu^2\text{-a.s.} \label{eq:derdint2}
\end{align}

Let $s(x,y)=i(\phi (x,y)-p_ep_d)$.
Combining the results in \eqref{eq:derdsum}, \eqref{eq:derd3}, \eqref{eq:derdint1} and \eqref{eq:derdint2} gives
\begin{align}
s(x,y)=i(\phi (x,y)-p_ep_d)=-i(\phi (y,x)-p_ep_d)=\overline{s(y,x)} \ \ \mu^2\text{-a.s.} \label{eq:derds1}
\end{align}
and
\begin{align}
\int s(x,y)s(y,z)d(\mu y)=0\ \ \mu^2\text{-a.s.} \label{eq:derds2}
\end{align}
Let $T$ be the integral operator with kernel $s$ on the space $L^2(\Omega,\mu)$
\begin{align*}
(Tg)(x)=\int s(x,y)g(y)d(\mu y).
\end{align*}
Since $s$ is bounded and $\mu$ is a finite measure, the kernel $s$ is in $L^2(\mu \times \mu )$.
Moreover, the integral operator $T$ is compact and self-adjoint by \eqref{eq:derds1},
which implies that $L^2(\Omega,\mu)$ has an orthonormal basis $(\psi_i)_{i\geq 1}$ of eigenfunctions for $T$
such that $T\psi _i=\lambda_i \psi_i$ for not necessarily distinct eigenvalues $\lambda_i$, and
\begin{align}
s(x,y)=\sum_{i\geq 1} \lambda_i \psi_i(x)\overline{\psi_i(y)} ~~~ \mu^2\text{-a.s.} \label{eq:derds3}
\end{align}
with the sum converging in $L^2$ (\cite{reedsimon:1980}).
Since $\psi_i$'s are orthonormal, the equations \eqref{eq:derds2} and \eqref{eq:derds3} imply
\begin{align}
\sum_{i\geq 1} \lambda_i^2 \psi_i(x)\overline{\psi_i(z)}=0 ~~~ \mu^2\text{-a.s.} \label{eq:derds4}
\end{align}
Therefore, for any $m\geq 1$, by multiplying the equation in \eqref{eq:derds4} by $\psi_m(z)\overline{\psi_m(x)}$ and integrating over $x$ and $z$ we obtain $\lambda_m^2=0$, i.e., $\lambda_m=0$ for each $m$.
Thus, $s(x,y)=0\ \mu^2$-a.s. and hence $\phi(x,y)=p_ep_d \ \mu^2$-a.s. which implies $p_d = 1/(1+\sqrt{1-p_e})$.
\end{proof}

\begin{remark}
Notice that Theorem \ref{thm:derdandvard} and Proposition \ref{prop:derdandard} together imply Theorem \ref{thm:1}.
However, we provide the proof of Theorem \ref{thm:1} to keep the proof of Theorem \ref{thm:derdandvard} shorter and also to point out similarities and the differences with the techniques used in \cite{beer:2011}.
\end{remark}

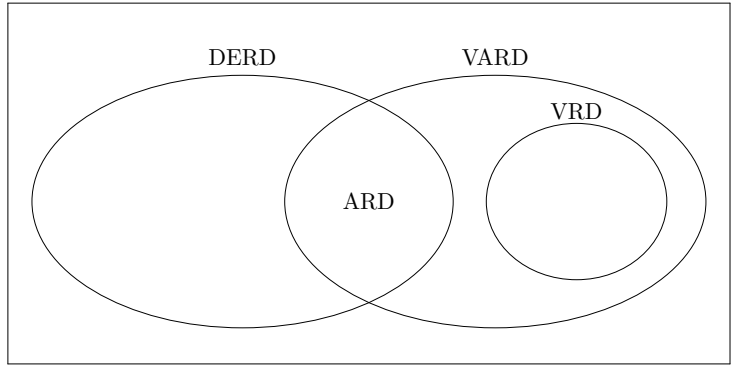
\begin{figure}\center
\scalebox{.8}{
\begin{tikzpicture}[line cap=round,line join=round,>=triangle 45,x=1.5cm,y=1.5cm]
\clip(0.8537834428690544,2.4212498658964582) rectangle (9.162062220187407,7.7);
\draw (1.,6.5)-- (9.,6.5);
\draw (9.,6.5)-- (9.,2.5);
\draw (9.,2.5)-- (1.,2.5);
\draw (1.,6.5)-- (1.,2.5);
\draw [rotate around={0.:(3.6,4.3)}] (3.6,4.3) ellipse (3.5cm and 2.1cm);
\draw [rotate around={0.:(6.4,4.3)}] (6.4,4.3) ellipse (3.5cm and 2.1cm);
\draw [rotate around={0.:(7.3,4.3)}] (7.3,4.3) ellipse (1.5cm and 1.3cm);
\draw (5,6.7) node[anchor=center] {Isomorphism-Invariant Random Digraphs};
\draw (5.,4.3) node[anchor=center] {ARD};
\draw (3.6,5.9) node[anchor=center] {DERD};
\draw (6.4,5.9) node[anchor=center] {VARD};
\draw (7.3,5.3) node[anchor=center] {VRD};
\end{tikzpicture}}
\caption{Venn diagram of DERDs and VARDs for $n\geq 4$.
The results in this paper indicate that all the five regions in the figure are nonempty.
In addition, the intersection of the classes DERDs and VARDs is the family of ARDs,
i.e., $\text{DERD}\cap \text{VARD}= \text{ARD}$.
}
\label{fig:derdvard}
\end{figure}

However, for $n\leq 3$, any DERD has a VARD representation.
\begin{thm}
\label{thm:derd3isvard}
Any DERD, $\mathbf{D}(n,p_e,p_d)$, with $n\leq 3$ is also a VARD.
\end{thm}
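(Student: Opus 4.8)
\noindent
The plan is to exhibit, for every DERD $\mathbf D(n,p_e,p_d)$ with $n\le 3$, an explicit probability space and kernel realizing it as a VARD; throughout I take $\Omega=\mathbb R/\mathbb Z$ with $\mu$ the uniform measure. First isolate the two numbers governing a single vertex-pair in a DERD: write $p:=p_ep_d$ and $s:=p_e(2p_d-1)$, so a pair carries no arc, one arc (in either direction), or both arcs with probabilities $1-2p+s$, $p-s$, $s$; the constraints $1/2\le p_d<1$, $0<p_e\le 1$ give $0\le s<p<1$ and $s\ge 2p-1$. Since the vertex-pairs of a DERD are independent, $P_{\mathbf D}(D)=s^{\,n_s}(p-s)^{\,n_{as}}(1-2p+s)^{\binom n2-n_e}$ for all $D\in\mathcal D_n$, so it suffices to match these products. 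For $n=1$ there is nothing to prove; for $n\in\{2,3\}$ the construction splits on the sign of the ``within-pair correlation'' $s-p^2$.

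\noindent
If $s\ge p^2$ I would take $\phi(x,y)=\tfrac{s}{p}\,\mathbf 1_{\{\langle x+y\rangle<p^2/s\}}$, where $\langle t\rangle$ is the representative of $t$ in $[0,1)$; this is symmetric and $[0,1]$-valued since $\tfrac sp\le1$, $\tfrac{p^2}{s}\le1$. The factor that a pair $\{i,j\}$ contributes to $P(D)$ depends only on the Bernoulli variable $Z_{ij}:=\mathbf 1_{\{\langle X_i+X_j\rangle<p^2/s\}}$, and the decisive point is that for $n\le 3$ the vector $(\langle X_i+X_j\rangle)_{1\le i<j\le n}$ is uniform on $(\mathbb R/\mathbb Z)^{\binom n2}$: the map $(X_1,\dots,X_n)\mapsto(X_i+X_j)_{i<j}$ is given by the $\binom n2\times n$ integer incidence matrix of $K_n$, which is surjective as a map of tori exactly when $n\le 3$ (for $n=3$ it has determinant $\pm2$), and a surjective integer-linear map of tori pushes the uniform measure to the uniform measure. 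Hence the $Z_{ij}$ are i.i.d.\ Bernoulli$(p^2/s)$, $P(D)$ factors over pairs, and a one-line computation gives each factor as $s$, $p-s$, or $1-2p+s$ according to the configuration of $D$ on that pair.

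\noindent
If $s<p^2$ a symmetric kernel cannot work ($\E[\phi(X_1,X_2)\phi(X_2,X_1)]=\E[\phi(X_1,X_2)^2]\ge p^2$ for symmetric $\phi$), so I would instead take $\phi(x,y)=p+r\,\varepsilon(y-x)$ with $r:=\sqrt{p^2-s}$ and $\varepsilon(t):=+1$ if $\langle t\rangle\in(0,\tfrac12)$, $-1$ if $\langle t\rangle\in(\tfrac12,1)$, $0$ otherwise. Here $\varepsilon$ is antisymmetric, $r\le\min(p,1-p)$ (using $s\ge0$, $s\ge2p-1$) so $\phi\in[0,1]$, and one checks the identities $\phi(x,y)\phi(y,x)\equiv s$, $(1-\phi(x,y))(1-\phi(y,x))\equiv 1-2p+s$, $\phi(x,y)(1-\phi(y,x))=(p-s)+r\,\varepsilon(y-x)$. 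Then $P(D)$ equals the constant $s^{\,n_s}(1-2p+s)^{\binom n2-n_e}$ times the expectation of a product over the $n_{as}$ single-arc pairs of terms $(p-s)\pm r\,\varepsilon(X_j-X_i)$. Expanding, every summand with one or two $\varepsilon$'s vanishes — using $\int_{\mathbb R/\mathbb Z}\varepsilon(y-x)\,d\mu(y)=0$ for every $x$ (which fails on $[0,1]$), together with the fact that for $n\le3$ any two vertex-pairs meet in a vertex, over which one conditions — while the unique summand with three $\varepsilon$'s equals $\pm\operatorname{tr}(\Sigma^3)$ for the skew-adjoint integral operator $\Sigma$ with kernel $\varepsilon(y-x)$, hence is $0$ as well. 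So again $P(D)=s^{\,n_s}(p-s)^{\,n_{as}}(1-2p+s)^{\binom n2-n_e}$.

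\noindent
The step carrying all the weight — and the one that fails for $n\ge4$ — is making $P(D)$ factor over vertex-pairs: a VARD always respects isomorphism, but a DERD assigns equal mass to many \emph{non}-isomorphic digraphs (most tellingly the directed $3$-cycle and the transitive tournament, each of probability $(p-s)^3$), so the kernel must be chosen so that the latent vertex labels induce no spurious dependence among pair-configurations beyond what isomorphism forces. A naive order-based kernel on $[0,1]$ fails precisely here (it makes the directed $3$-cycle have probability $0$); the circle repairs this, via the balance identity for $\varepsilon$ and the surjectivity of the sum-incidence map, both of which hold only up to $n=3$. Once $P(D)$ factors, checking the finitely many digraph probabilities and that all parameters stay in $[0,1]$ is routine.
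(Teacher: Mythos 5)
Your proof is correct, but it takes a genuinely different route from the paper's. The paper builds a single kernel, valid for all admissible $(p_e,p_d)$, on the product space $\Omega=[0,1)\times[0,1)$: it factors $\phi\bigl((u,u'),(v,v')\bigr)=f(u,v)\,g(u',v')$, where the symmetric indicator $f(u,v)=\mathbf{1}_{\{u\oplus v\le p_e\}}$ reproduces the VRG representation of $\mathbf{G}(3,p_e)$ and the asymmetric $g(u',v')=\mathbf{1}_{\{u'\ominus v'\le 1/2\}}+(2p_d-1)\mathbf{1}_{\{u'\ominus v'>1/2\}}$ handles the direction; the heart of that proof is the computation of the first three circular moments of $g$ (including $\int g(x,y)g(y,z)g(z,x)=p_d^3$ via an acute/obtuse-triangle case analysis). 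You instead work with a one-dimensional latent variable on $\mathbb{R}/\mathbb{Z}$ and split on the sign of $s-p^2$, with $p=p_ep_d$ and $s=p_e(2p_d-1)$: a symmetric rescaled indicator of $\{\langle x+y\rangle<p^2/s\}$ when $s\ge p^2$, and $p+\sqrt{p^2-s}\,\varepsilon(y-x)$ when $s<p^2$. Both arguments ultimately rest on the same circle-specific phenomena — your Case 1 uses exactly the fact (Lemma 4.5 of Beer et al., also invoked by the paper) that the pairwise sums $\bigl(\langle X_i+X_j\rangle\bigr)_{i<j}$ are jointly uniform for $n\le 3$, and your Case 2 replaces the paper's moment computations for $g$ by the cleaner observation that all mixed moments of the antisymmetric kernel $\varepsilon$ of order $\le 3$ vanish (the cubic term by the reflection $x\mapsto -x$, equivalently $\operatorname{tr}(\Sigma^3)=0$ for skew-adjoint $\Sigma$). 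I checked the parameter constraints ($0\le s<p$, $s\ge 2p-1$ keep both kernels $[0,1]$-valued) and the factorization of $P(D)$ over vertex pairs in both cases; everything is sound, including the degenerate-looking boundary $s=0$ (i.e., $p_d=1/2$), which falls into your Case 2. What the paper's single construction buys, and yours does not, is that its kernel becomes $\{0,1\}$-valued exactly when $p_d=1/2$, which the authors use in the subsequent remark to conclude that $\mathbf{D}(3,p_e,1/2)$ is a VRD; your Case 2 kernel at $p_d=1/2$ takes the three values $0,p,2p$, so that corollary is not immediate from your construction. What your version buys is a more transparent reason why $n=3$ is the breaking point: for $n\ge 4$ the $4$-cycle term $\mathbf{E}[\varepsilon_{12}\varepsilon_{23}\varepsilon_{34}\varepsilon_{41}]=\operatorname{tr}(\Sigma^4)>0$ obstructs Case 2, in exact parallel with the operator argument of Theorem \ref{thm:derdandvard}.
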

\begin{proof}
Let $\oplus$ and $\ominus$ denote addition and subtraction modulo 1, respectively.
In other words, for real numbers $0\leq x,y <1$,
\begin{align*}
x\oplus y =
  \begin{cases}
   x+y, & \text{if } x+y<1 \\
   x+y-1, &  \text{if } x+y \geq 1
  \end{cases}
\end{align*}
and
\begin{align*}
x\ominus y =
  \begin{cases}
   x-y, & \text{if } x-y\geq 0 \\
   x-y+1, & \text{if } x-y< 0
  \end{cases}
\end{align*}
If $U_1,U_2,U_3$ are independent uniform random variables over $[0,1)$,
then so are $U_1\oplus U_2, U_2\oplus U_3$ and $U_3\oplus U_1$ (see Lemma 4.5 in \cite{beer:2011}).
Therefore, $\mathbf{G}(3, p_e)$ can be represented as a vertex random graph $\mathbf{G}(3,[0,1), \nu,f)$ where
$\nu$ is the uniform distribution on $[0,1)$ and $f(x,y)=\mathbf{1}_{ \{ x\oplus y \leq p_e \}}$ (\cite{beer:2011}).

Let $g(x,y)=\mathbf{1}_{ \{x\ominus y\leq 1/2 \}}+(2p_d-1)\mathbf{1}_{ \{ x\ominus y>1/2 \} }$ for every $0\leq x,y <1$.
We claim that $\mathbf{D}(3,p_e,p_d)$ is a VARD, $\mathbf{D}(3,\Omega ,\mu, \phi)$, where
$\Omega=[0,1)\times [0,1)$, $\mu$ is product of two uniform distributions on $[0,1)$ and
$\phi( (u_1,u_1'), (u_2,u_2'))=f(u_1,u_2)g(u_1',u_2')$.
First note that
\begin{align}
g(x,y)+g(y,x)=2p_d \ \text{ and } \ g(x,y)g(y,x)=2p_d-1, \label{eq:derd3n1}
\end{align}
for every $ 0\leq x,y<1$.
As $f$ is a symmetric indicator function, the equations in \eqref{eq:derd3n1} imply
\begin{align}
\phi( (u_1,u_1'), (u_2,u_2')) \phi( (u_2,u_2'), (u_1,u_1')) &=f(u_1,u_2)(2p_d-1), \label{eq:derd3n2}\\
\phi( (u_1,u_1'), (u_2,u_2')) (1- \phi( (u_2,u_2'), (u_1,u_1'))) &=f(u_1,u_2)( g(u_1',u_2')-(2p_d-1)), \label{eq:derd3n3} \\
(1-\phi( (u_1,u_1'), (u_2,u_2'))) (1- \phi( (u_2,u_2'), (u_1,u_1'))) &=1-f(u_1,u_2). \label{eq:derd3n4}
\end{align}

We next focus on the function $g$.
It is easy to see that
\begin{align}
\int_0^1 g(x,y)~ dy= \int_0^1 g(x,y)~ dx=\frac{1}{2} 1+\frac{1}{2} (2p_d-1)=p_d, \label{eq:derd3n5}
\end{align}
for every $0\leq x,y<1$.

Consider the circle obtained by identifying the end points of the interval $[0,1]$ such that
$1/4$ is on the arc that starts from 0 and ends at $1/2$ along the clockwise direction.
Then, $x\ominus y$ is equal to the length of the arc of this circle
which starts from $x$ and ends at $y$ along the counterclockwise direction.
Notice that $g(x,y)g(y,z)$ and $g(x,y)g(y,z)g(z,x)$ depends on the ordering of $x,y,z$ along counterclockwise direction and
whether the points $x,y,z$ form an acute or obtuse triangle.

If $x,y,z$ form an acute triangle, there are basically two cases for the ordering, $x,y,z$ or $x,z,y$.
In the first case, $g(x,y)g(y,z)=(2p_d-1)^2$, and in the latter case $g(x,y)g(y,z)=1^2=1$.

If $x,y,z$ form an obtuse triangle, all six permutations of $x,y,z$ (x,y,z; x,z,y; y,x,z; y,z,x; z,x,y; z,y,x) are possible with the point at the middle corresponding to the obtuse angle.
Then, we have $g(x,y)g(y,z)= (2p_d-1)^2, (2p_d-1), (2p_d-1), (2p_d-1), (2p_d-1), 1$, respectively.
Moreover, it easy to show that three uniformly at random points on the circle form an acute triangle with probability $1/4$.
Therefore, we obtain
\begin{align}
\int_{[0,1)^3} g(x,y)g(y,z) ~ dx dy dz =\frac{1}{4}\cdot \frac{1}{2}((2p_d-1)^2+1)+
\frac{3}{4} \cdot \frac{1}{6} ((2p_d-1)^2+4(2p_d-1)+1) =p_d^2. \label{eq:derd3n6}
\end{align}
Similarly, we have
\begin{align}
\int_{[0,1)^3} g(x,y)g(y,z)g(z,x) ~ dx dy dz =\frac{1}{8}((2p_d-1)^3+1)+
\frac{1}{8} (3(2p_d-1)^2+3(2p_d-1)) =p_d^3. \label{eq:derd3n7}
\end{align}
By using the results in \eqref{eq:derd3n1}-\eqref{eq:derd3n7}, one can easily verify that
\begin{align*}
 \int P_{\mathbf{x}}(D) d(\mu \mathbf{x})=p_e^{n_e} (1-p_e)^{3-n_e} (1-p_d)^{n_{as}} (2p_d-1)^{n_s},
\end{align*}
for every $D\in \mathcal{D}_3$, and hence the desired result follows.
Furthermore, the same setting works for $n=2$ as well.
\end{proof}

\begin{remark}
{\bf Is every DERD with $n=3$ a VRD?}
Note that the function $\phi$ constructed in the proof of Theorem \ref{thm:derd3isvard} is binary (only takes the values 0 or 1) if and only if $p_d=1/2$.
Hence, by Theorem \ref{thm:derd3isvard} we see that any $\mathbf{D}(3,p_e,1/2)$ has a VRD representation.
But, by Proposition \ref{prop:derdandard}, $\mathbf{D}(3,p_e,1/2)$ is an ARD only if $p_e=0$ which gives a degenerate random digraph, and hence $\mathbf{D}(3,p_e,1/2)$ does not yield a non-degenerate ARD.
Other than the degenerate ones, is there any DERD $\mathbf{D}(3,p_e,p_d)$ with $p_d>1/2$ which is also a VRD?
Furthermore, is there an ARD with $n=3$ which has a VRD representation?
For now, these questions remain to be open, but, we conjecture that any DERD with $n=3$ is also a VRD.
\end{remark}

However, for $n=2$ the families DERDs and VRDs coincide with the all isomorphism-invariant random digraphs.
Let $D_1,D_2,D_3$ and $D_4$ be the digraphs with vertex set $[2]$ which only has the arc (1,2), only the arc (2,1), both of the arcs and none of the arcs, respectively.
Note that to obtain an isomorphism-invariant random digraph necessary and sufficient condition is $P(D_1)=P(D_2)$.
Let $\mathbf{D}$ be the random digraph with $P(D_1)=P(D_2)=p_1$, $P(D_3)=p_2$ and $P(D_4)=1-2p_1-p_2$.
First observe that $\mathbf{D}$ is an ARD if and only if $\sqrt{p_2}(1-\sqrt{p_2})=p_1$.
Letting $p_e=2p_1+p_2$ and $p_d=(p_1+p_2)/(2p_1+p_2)$ gives that $\mathbf{D}$ is a DERD $\mathbf{D}(2,p_e,p_d)$.
With the same $p_e$ and $p_d$,
let $\phi( (u_1,u_1'), (u_2,u_2'))=\mathbf{1}_{ \{u_1\oplus u_1' \leq p_e\} } \mathbf{1}_{ \{ u_2\ominus u_2' \leq p_d \} }$.
Then, it is easy to see that $\mathbf{D}$ is also VRD $\mathbf{D}(2,\Omega ,\mu, \phi)$, where
$\Omega=[0,1)\times [0,1)$, $\mu$ is the uniform distribution on $[0,1)^2$.
Therefore, when $n=2$, any isomorphism-invariant random digraph is both a DERD and a VRD (hence also a VARD).

\begin{remark}
{\bf Positive Dependence:}
Recall that by the inequality in \eqref{eq:derd4}, for any {\rm VARD} $\mathbf{D}$ we have the positive dependence
\begin{align}
P( \{ (1,2),(1,3)\} \subset A(\mathbf{D}))\geq P((1,2)\in A(\mathbf{D}))P((1,3)\in A(\mathbf{D}))= P((1,2)\in A(\mathbf{D}))^2. \label{eq:posdep}
\end{align}
Furthermore, the inequality in \eqref{eq:posdep} can be generalized by H\"{o}lder's inequality as follows
\begin{align}
P( \{ (1,2),\dots ,(1,m)\} \subset A(\mathbf{D}))\geq \prod_{i=2}^m P((1,i)\in A(\mathbf{D})) =
P((1,2)\in A(\mathbf{D}))^{m-1} \label{eq:posdepm}
\end{align}
for every {\rm VARD} $\mathbf{D}$ and $2\leq m \leq n$.
Similarly, we have the same inequality in \eqref{eq:posdepm} for any {\rm DVERD} as well and
note that equality holds for every {\rm DERD}.
However, there are random digraphs other than {\rm DERD}s satisfying equality in \eqref{eq:posdepm} for each $m$.
For example, consider the {\rm VRD}, $\mathbf{D}(n,[0,1),\mu,\phi)$,
where $\mu$ is the uniform distribution over $[0,1)$ and $\phi(x,y)=\mathbf{1}_{ \{x\ominus y\geq 3/8 \}} \mathbf{1}_{ \{y\ominus x\geq 3/8 \}}$.
Clearly, in this case, we have $P( \{ (1,2), \dots ,(1,m) \} \subset A(\mathbf{D}))=(1/4)^{m-1}$ and
$P((1,i)\in A(\mathbf{D}))=1/4$ for each $i$.
Also, it is easy to verify that $\mathbf{D}(n,[0,1),\mu,\phi)$ has no {\rm DERD} representation since
$P( \{ (1,2),(1,3), (2,3) \} \subset A(\mathbf{D}))=0$.
In the same manner, one can easily obtain similar results for random graphs.
In other words, for any {\rm VERG}, $\mathbf{G}(n,\Omega,\mu,\phi)$, and $2\leq m \leq n$, we have
\begin{align}
P( \{ \{1,2\}, \{1,3\}, \dots , \{1,m\} \}  \subset E(\mathbf{G}))\geq P( \{1,2\} \in E(\mathbf{G}))^{m-1}, \label{eq:vergposdepm}
\end{align}
and equality holds for every {\rm ERG}.
The underlying random graph of the {\rm VRD}, $\mathbf{D}(n,[0,1),\mu,\phi)$, described above is an example for
random graphs with no {\rm ERG} representation which attains equality in \eqref{eq:vergposdepm} for every $m$.
\end{remark}

\section{Where do random nearest neighbor digraphs reside?}
\label{sec:rnnd}
We determine the class relationship for one of the most commonly studied random digraphs, namely,
random nearest neighbor (NN) digraphs (e.g., see \cite{friedman:1983}, \cite{eppstein:1997},\cite{cuzick:1990} and \cite{penrose:2001}).
Let $n\geq 3, k \geq 1$ and $d\geq 1$ be integers with $k<n-1$.
Let $\mu$ be a probability distribution over $\mathbb{R}^d$
with density function $f$ that is assumed to be continuous almost everywhere with respect to Lebesgue measure.
Let $|\cdot|$ denote a fixed norm on $\mathbb{R}^d$ and $X=(X_1,\dots ,X_n)$ be i.i.d. vectors in $\mathbb{R}^d$ drawn from $\mu$.

For given $\mathbf{x}=(x_1,\dots ,x_n)$, the set of $k$ \emph{nearest neighbors} ($k$NNs) of $x_i$ is the closest $k$ points to $x_i$
among the points $\{x_1,\dots ,x_n\}\backslash \{x_i\}$ with respect to the given norm $|\cdot |$
and denoted as $k\text{NN}_{\mathbf{x}}(x_i)$.
As the occurrence of a tie is an event with zero probability for points from an a.e. continuous $f$,
we may assume that $k\text{NN}_{\mathbf{x}}(x_i)$ is well defined for each $i$ with probability 1.
The $k$ \emph{nearest neighbor digraph} of $\mathbf{x}$ is the digraph with vertex set $V=[n]$
and the arc set $A=\{(i,j): x_j \in k\text{NN}_{\mathbf{x}}(x_i) \}$,
(i.e., the arc $(i,j)$ is inserted if and only if
$x_j$ is one of the $k$NNs of $x_i$) and denoted as $k\text{NND}(\mathbf{x})$.
\begin{definition}\label{def:rnnd}
The \emph{random nearest neighbor digraph} (RNND) is the random digraph $\mathbf{D}(n,[k],d, \mu, |\cdot |)$ with
\begin{align*}
P(D)=\int \mathbf{1}_{ \{ k\text{NND}(\mathbf{x})=D \}} d(\mu  \mathbf{x} ) \text{ for every } D\in \mathcal{D}_n.
\end{align*}
\end{definition}
Notice that we picked $k$ to be less than $n-1$, because otherwise, we obtain a degenerate random digraph.

\begin{prop}
Every RNND is isomorphism-invariant.
\end{prop}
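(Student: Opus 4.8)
The plan is to mimic the proof of Proposition~\ref{prop:isoinv}: exploit the fact that the $k$NN construction is \emph{equivariant} under relabeling of the sample points, and then combine this with the permutation-invariance of the product measure $\mu^n$.

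First I would fix an RNND $\mathbf{D}(n,[k],d,\mu,|\cdot|)$ and two isomorphic digraphs $D,D'\in\mathcal{D}_n$, so that there is a permutation $\sigma$ of $[n]$ with $(i,j)\in A(D)\Leftrightarrow(\sigma(i),\sigma(j))\in A(D')$. Given $\mathbf{x}=(x_1,\dots,x_n)\in(\mathbb{R}^d)^n$, set $\mathbf{y}=(y_1,\dots,y_n)$ with $y_{\sigma(i)}=x_i$ for all $i$, i.e., $y_j=x_{\sigma^{-1}(j)}$. The key deterministic claim is that $k\mathrm{NND}(\mathbf{x})=D$ if and only if $k\mathrm{NND}(\mathbf{y})=D'$. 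To see this, note that $\{x_1,\dots,x_n\}=\{y_1,\dots,y_n\}$ as point sets and $|x_i-x_j|=|y_{\sigma(i)}-y_{\sigma(j)}|$ for all $i,j$ (the norm only depends on the pair of points, not on their indices), so $x_j\in k\mathrm{NN}_{\mathbf{x}}(x_i)$ exactly when $y_{\sigma(j)}\in k\mathrm{NN}_{\mathbf{y}}(y_{\sigma(i)})$; hence $(i,j)\in A(k\mathrm{NND}(\mathbf{x}))\Leftrightarrow(\sigma(i),\sigma(j))\in A(k\mathrm{NND}(\mathbf{y}))$, i.e., $k\mathrm{NND}(\mathbf{y})$ is exactly the $\sigma$-relabeling of $k\mathrm{NND}(\mathbf{x})$, and the claim follows since $D'$ is the $\sigma$-relabeling of $D$. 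Throughout we work on the event that no tie occurs among the interpoint distances, which by the a.e.\ continuity of $f$ has $\mu^n$-probability $1$, so $k\mathrm{NND}(\cdot)$ is well defined $\mu^n$-a.s.\ and the indicator integrands are measurable.

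Next I would convert the pointwise statement into an identity of integrals. Since $\mathbf{y}$ is obtained from $\mathbf{x}$ by the coordinate permutation $\sigma$, and $\mu^n$ is a product measure (hence invariant under permuting coordinates), the change of variables $x_i\mapsto y_i$ — whose Jacobian has absolute value $1$, as in the proof of Proposition~\ref{prop:isoinv} — gives
\begin{align*}
P(D)=\int \mathbf{1}_{\{k\mathrm{NND}(\mathbf{x})=D\}}\,d(\mu\mathbf{x})
=\int \mathbf{1}_{\{k\mathrm{NND}(\mathbf{y})=D'\}}\,d(\mu\mathbf{x})
=\int \mathbf{1}_{\{k\mathrm{NND}(\mathbf{x})=D'\}}\,d(\mu\mathbf{x})=P(D'),
\end{align*}
where the second equality is the deterministic claim above and the third is the change of variables. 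Since $D,D'$ were arbitrary isomorphic digraphs, isomorphism-invariance follows.

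The only point demanding care — the ``main obstacle,'' though a mild one — is the bookkeeping in the equivariance claim: one must verify that relabeling the sample points by $\sigma$ relabels the arcs of the $k$NN digraph by precisely the same $\sigma$. This reduces to the observation that removing the point $x_i$ from $\{x_1,\dots,x_n\}$ is the same as removing $y_{\sigma(i)}$ from $\{y_1,\dots,y_n\}$ and that the nearest-neighbor relation among the remaining points is determined by the (index-free) pairwise distances. Measurability of $\mathbf{x}\mapsto k\mathrm{NND}(\mathbf{x})$ off a $\mu^n$-null set is already built into Definition~\ref{def:rnnd}, so no further work is needed there.
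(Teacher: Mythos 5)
Your proposal is correct and follows essentially the same route as the paper's proof: define $\mathbf{y}$ by permuting the coordinates of $\mathbf{x}$ via $\sigma$, observe that $k\mathrm{NND}(\mathbf{x})=D$ if and only if $k\mathrm{NND}(\mathbf{y})=D'$, and then transfer this to $P(D)=P(D')$ exactly as in Proposition \ref{prop:isoinv}. You merely spell out the equivariance and tie/measurability details that the paper leaves as ``easy to see.''
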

\begin{proof}
Let $\mathbf{D}(n,[k],d,\mu, |\cdot |)$ be a RNND and $D,D'\in \mathcal{D}_n$ be isomorphic digraphs.
Then there exists a permutation $\sigma$  on $[n]$ such that
\begin{align*}
(i,j)\in A(D) \ \Leftrightarrow \ (\sigma(i),\sigma(j))\in A(D').
\end{align*}
Let $\sigma^{-1}$ be the inverse of $\sigma$ and $\mathbf{y}=(y_1,\dots ,y_n)$
such that $y_i=x_{\sigma^{-1}(i)}$ for all $1\leq i \leq n$, i.e., $y_{\sigma(i)}=x_i$.
Then it is easy to see that
\begin{align*}
k\text{NND}(\mathbf{x})=D \ \Leftrightarrow \ k\text{NND}(\mathbf{y})=D'.
\end{align*}
The rest of the proof is similar to that of Proposition \ref{prop:isoinv}.
\end{proof}

As in VRDs, in the construction of a RNND, once $\mathbf{x}$ is fixed, then the arcs are uniquely determined.
However, in a VRD, by definition, inserting the arc $(i,j)$ only depends on $x_i$ and $x_j$
whereas in a RNND it depends on all the data points.
The following proposition implies that a RNND is not a VRD.

\begin{prop}
\label{prop:onlyrnnd}
A RNND is neither a VARD nor a DRD.
\end{prop}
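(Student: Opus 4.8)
The strategy is to pin down one quantity that behaves differently in RNNDs than in the other two families: the out-degree of a fixed vertex. For a digraph $D$ and a vertex $v$ write $\deg^+_D(v):=|\{u:(v,u)\in A(D)\}|$. In a RNND this is deterministic---equal to $k$---whereas in any VARD or any DRD its distribution is non-trivial, and this discrepancy is the contradiction I will exploit.

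First, the RNND fact. Since the density $f$ is continuous almost everywhere with respect to Lebesgue measure, the event that some interpoint distances coincide has $\mu^n$-probability $0$; hence for $\mu^n$-almost every $\mathbf{x}$ the set $k\text{NN}_{\mathbf{x}}(x_i)$ is well defined with exactly $k$ elements, so $\deg^+_{k\text{NND}(\mathbf{x})}(i)=k$ for every $i$. Consequently $P\bigl(\deg^+_{\mathbf{D}}(1)=k\bigr)=1$ for a RNND $\mathbf{D}$. Note also that the hypotheses $n\ge 3$, $k\ge 1$, $k<n-1$ give $1\le k\le n-2$ and $n-1\ge 2$.

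Second, I compute $P\bigl(\deg^+_{\mathbf{D}}(1)=k\bigr)$ in the other families and show it is strictly below $1$. For a VARD $\mathbf{D}(n,\Omega,\nu,\phi)$, a Fubini computation on the formula for $P_{\mathbf{x}}(D)$ --- using that $X_2,\dots,X_n$ are i.i.d.\ $\nu$ and that, given the vertex marks, the arcs are inserted independently --- shows that the indicators $\mathbf{1}\{(1,j)\in A(\mathbf{D})\}$, $j=2,\dots,n$, are conditionally i.i.d.\ given $X_1$, $\mathrm{Bernoulli}(q(X_1))$ with $q(X_1):=\int\phi(X_1,y)\,d(\nu y)$. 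Integrating out,
\begin{align*}
P\bigl(\deg^+_{\mathbf{D}}(1)=k\bigr)=\int_{\Omega}\binom{n-1}{k}\,q(x)^{k}\,\bigl(1-q(x)\bigr)^{n-1-k}\,d(\nu x)\ \le\ c_1,
\end{align*}
where $c_1:=\max_{q\in[0,1]}\binom{n-1}{k}q^{k}(1-q)^{n-1-k}$. Since $1\le k\le n-2$ this maximand vanishes at $q=0$ and $q=1$, and for $q\in(0,1)$ it equals $P(\mathrm{Binomial}(n-1,q)=k)<1$; being continuous on the compact interval $[0,1]$ it attains its maximum, so $c_1<1$. For a DRD generated by an isomorphism-invariant random graph $\mathbf{G}$ with direction probability $p_d\in[1/2,1)$: conditioning on $U(\mathbf{D})=G$ (a genuine event, $\mathcal{G}_n$ being finite, with $P(U(\mathbf{D})=G)=P_{\mathbf{G}}(G)$), the defining factor $(1-p_d)^{n_{as}}(2p_d-1)^{n_s}$ exhibits the orientation as an independent choice over the edges of $G$, the edge $\{1,j\}$ producing the arc $(1,j)$ with probability $(1-p_d)+(2p_d-1)=p_d$; hence, conditionally on $U(\mathbf{D})=G$, $\deg^+_{\mathbf{D}}(1)\sim\mathrm{Binomial}(\deg_G(1),p_d)$, and
\begin{align*}
P\bigl(\deg^+_{\mathbf{D}}(1)=k\bigr)=\sum_{G\in\mathcal{G}_n}P_{\mathbf{G}}(G)\binom{\deg_G(1)}{k}p_d^{\,k}(1-p_d)^{\deg_G(1)-k}\ \le\ c_2,
\end{align*}
where $c_2:=\max_{0\le m\le n-1}\binom{m}{k}p_d^{\,k}(1-p_d)^{m-k}$. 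Each term of this finite maximum is $<1$: it is $0$ when $m<k$, it is $p_d^{\,k}<1$ when $m=k$, and it is $P(\mathrm{Binomial}(m,p_d)=k)<1$ when $m>k$ (all because $p_d\in(0,1)$ and $k\ge 1$); hence $c_2<1$.

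Putting these together, if a RNND were a VARD we would obtain $1\le c_1<1$, and if it were a DRD we would obtain $1\le c_2<1$; both are absurd, so a RNND is neither. The one place that requires care is the pair of conditional-independence claims used above --- for VARDs, that marginalizing the i.i.d.\ vertex marks turns the out-neighbourhood indicators of vertex $1$ into conditionally i.i.d.\ $\mathrm{Bernoulli}(q(X_1))$ variables (an elementary computation with the product formula for $P_{\mathbf{x}}(D)$ and Fubini's theorem), and for DRDs, that $(1-p_d)^{n_{as}}(2p_d-1)^{n_s}$ is exactly the law of independent per-edge choices among ``$(1,j)$ only'' (probability $1-p_d$), ``$(j,1)$ only'' (probability $1-p_d$) and ``both'' (probability $2p_d-1$), so that $(1,j)\in A(\mathbf{D})$ has conditional probability $p_d$ independently across the edges incident to vertex $1$.
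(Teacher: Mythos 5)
Your proof is correct, but it follows a genuinely different route from the paper's. The paper splits the two claims: for the VARD part it uses the positive-dependence inequality $P(\{(1,2),(1,3)\}\subset A(\mathbf{D}))\geq P((1,2)\in A(\mathbf{D}))^2$ (a Cauchy--Schwarz consequence established earlier, equation \eqref{eq:posdep}) and contrasts it with the exact RNND computation $P(\{(1,2),(1,3)\}\subset A(\mathbf{D}))=\frac{k(k-1)}{(n-1)(n-2)}<\bigl(\frac{k}{n-1}\bigr)^2$, i.e.\ a positive- versus negative-correlation clash; for the DRD part it uses a support argument: any DRD assigns positive probability $P_{\mathbf{G}}(G)(1-p_d)^{n_{as}}>0$ to some digraph with a vertex that is the tail of no arc, while in a RNND every vertex has out-degree $k\geq 1$ almost surely. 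You instead extract a single statistic, $\deg^+_{\mathbf{D}}(1)$, which is a.s.\ equal to $k$ for a RNND, and show its point mass at $k$ is bounded away from $1$ in both other families: in a VARD it is a mixture of $\mathrm{Binomial}(n-1,q(x))$ laws (your conditional-i.i.d.\ claim given $X_1$ is correct and follows from the product form of $P_{\mathbf{x}}(D)$ and independence of $X_2,\dots,X_n$), and in a DRD a mixture of $\mathrm{Binomial}(\deg_G(1),p_d)$ laws (the per-edge orientation probabilities $1-p_d$, $1-p_d$, $2p_d-1$ do sum to $1$ and factorize as you say), with the mode-probability bounds $c_1<1$ (using $1\le k\le n-2$) and $c_2<1$ (using $k\ge 1$, $p_d<1$) both justified. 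What your approach buys is uniformity and elementarity: one argument covers both families, with no appeal to Cauchy--Schwarz or to correlation structure, only to the fact that a nondegenerate (mixture of) binomial law cannot be a point mass. What the paper's approach buys is brevity and reuse: the VARD half is a one-line consequence of an inequality it has already proved and exploits elsewhere (e.g.\ Remark \ref{rem:underrnnd}), and the DRD half is an almost computation-free support argument, which in fact rests on the same structural feature you use, namely that every vertex of a $k$NND has out-degree exactly $k$.
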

\begin{proof}
We first show that no RNND has a VARD representation.
Recall that by the inequality in \eqref{eq:posdep},
for any VARD with $n\geq 3$ we have
\begin{align}
P( \{ (1,2),(1,3)\} \subset A(\mathbf{D})) \geq \left (P((1,2)\in A(\mathbf{D})) \right )^2. \label{eq:rnotvd1}
\end{align}
On the other hand, in a RNND, we have
\begin{align}
P( \{ (1,2),(1,3) \} \subset A(\mathbf{D}))=\frac{k(k-1)}{(n-1)(n-2)}< \left(\frac{k}{n-1}\right)^2=(P((1,2)\in A(\mathbf{D})))^2  \label{eq:rnotvd2}
\end{align}
by symmetry, and hence the result follows by \eqref{eq:rnotvd1} and \eqref{eq:rnotvd2}.

We show that there is no RNND which is also a DRD by contradiction.
Suppose that a RNND, $\mathbf{D}=(\mathcal{D}_n, P)$, is a DRD
generated by the random graph $\mathbf{G}=(\mathcal{G}_n, P_{\mathbf{G}})$ and with direction probability $p_d$.
Let $G$ be a graph in $\mathcal{G}_n$ with $P_{\mathbf{G}}(G)>0$, and $D$ be a digraph in $\mathcal{D}_n$ such that
$U(D)=G$, $n_s(D)=0$ and containing a vertex which is the tail of no arc.
In other words, $D$ is a digraph whose underlying graph is $G$, containing no symmetric arcs and
there exists a vertex $v$ in $V(D)$ such that $v$ is the head of every arc incident to $v$.
Then, as $\mathbf{D}$ is a DRD, we have
\begin{align}
P(D)= P_{\mathbf{G}}(G)(1-p_d)^{n_{as}}>0, \label{eq:rnndanddrd}
\end{align}
since $P_{\mathbf{G}}(G)>0$ and $p_d<1$.
On the other hand, for any given $\mathbf{x}=(x_1,\dots ,x_n)$,
every vertex is the tail of exactly $k$ arcs in $k\text{NND}(\mathbf{x})$.
Therefore, we obtain $P(D)=0$ which contradicts with \eqref{eq:rnndanddrd}.
\end{proof}

\begin{remark}\label{rem:underrnnd}
For any set of points in $\mathbb{R}^d$, the number of points sharing a common $k$NN is bounded above by a constant which is independent of the number of points in the set (see, \cite{yukich:1998}).
That is, there exists a number $c$ which only depends on $d,k$ and the norm $|\cdot |$ such that
in any $k$NND a vertex is the head of at most $c$ arcs.
Therefore, a vertex of the underlying graph of a $k$NND is incident to at most $c+k$ edges.
Hence, if $\mathbf{G}$ is the underlying random graph of a RNND with $n\geq c+k+2$,
then we have $P( \{ \{1,2\}, \{1,3\}, \dots , \{1,n\} \}  \subset E(\mathbf{G}))=0$ which implies that
$\mathbf{G}$ is not a VERG by the inequality in \eqref{eq:vergposdepm}.
\end{remark}

\begin{remark}
One can also generate NN type random digraphs other than RNNDs.
For instance, in the construction of a RNND insert the arc $(i,j)$ if and only if $x_j$ is the $k$-th NN of $x_i$
(i.e., insert only the one to its $k$-th NN instead of putting arcs from each point to its all $k$NNs).
We can generalize RNNDs to $\mathbf{D}(n,S_k,d,\mu, |\cdot |)$ where $S_k$ is a nonempty subset of $[k]$ and
we insert the arc $(i,j)$ if and only if $x_j$ is the $s$-th NN of $x_i$ for some $s\in S_k$.
Then the results for RNNDs in this section are also valid for any $\mathbf{D}(n,S_k,d,\mu, |\cdot |)$,
i.e., every $\mathbf{D}(n,S_k,d,\mu, |\cdot |)$ is isomorphism-invariant, has no VARD or DRD representation, and for large $n$,
has an underlying random graph which is not a VERG.
\end{remark}

Note that Proposition \ref{prop:onlyrnnd} implies that the regions $\text{VARD}^c$ in Figure \ref{fig:vards},
$\text{DRD}^c$ in Figure \ref{fig:drds} and $(\text{DERD} \cup \text{VARD})^c$ in Figure \ref{fig:derdvard} are nonempty.

For $n=3$, the only possible value of $k$ is 1.
In this case, the pair with the minimum distance are NNs of each other
and the NN of the remaining point is one of the points in this pair.
Thus, by symmetry we have $P(A(\mathbf{D})=\{(i,j),(j,i),(k,i)\})=1/6$ for every pairwise distinct $i,j,k \in \{1,2,3\}$,
and therefore any RNND $\mathbf{D}(3,1,d,\mu, |\cdot |)$ is a uniform distribution over six digraphs
independent of $d,\mu$ and $|\cdot|$.
Also, note that the underlying random graph of $\mathbf{D}(3,1,d,\mu, |\cdot |)$ is always $\mathbf{G}(3,2)$.

Observe that any RNND and $\mathbf{D}(n,nk)$ have the same number of arcs.
However, these two random digraphs are different.
Because, for the event $E=\{ \{(1,2), \dots ,(1,k+2) \} \subset A (\mathbf{D} )  \}$ we have
$P(E)=0$ in a RNND since each vertex is tail of exactly $k$ arcs,
whereas $P(E)>0$ in $\mathbf{D}(n,nk)$ since $nk \geq k+1$.
Recall that $n_e=n_a-n_s$,
and hence the number of edges in the underlying graph of a $k$NND is $nk$ minus the number of symmetric arcs.
It is easy to see that for $n>3$ there exist $k$NNDs with different number of symmetric arcs,
and therefore the underlying random graph of a RNND with $n>3$ is not a $\mathbf{G}(n,m)$.

\section{Discussion and Conclusions}
\label{sec:dis}
In this paper, we present four families, namely, ARDs, VRDs, VARDs and DRDs, of isomorphism-invariant random digraphs based on where randomness resides.
First three of these classes are extensions of the isomorphism-invariant random graph classes presented in \cite{beer:2011} to digraphs.
The family of DRDs is obtained by randomly assigning directions to edges of isomorphism-invariant random graphs,
and includes three families, DERDs, DVRDs and DVERDs.

The main results of this paper are illustrated in Figures \ref{fig:vards}-\ref{fig:derdvard}.
For $n\geq 4$, we show that there is no random digraph that is both an ARD and a VRD
(which is the digraph counterpart of the result in \cite{beer:2011},
that there is no non-degenerate random graph which is both an ERG and a VRG for $n\geq 4$).
\cite{beer:2011} also show that for every $n\geq 6$, there exist VERGs which neither belong to ERGs nor VRGs.
We reduce the lower bound for $n$ from 6 to 4 by using non-symmetric structure of the function $\phi$, and obtain the digraph counterpart of their result, i.e.,
there exist VARDs which have no ARD or VRD representation, for $n\geq 4$.
However, for DRDs we have the same lower bound 6 for $n$; that is,
for $n\geq 6$ there exist DVERDs which neither belong to DERDs or DVRDs.
We also show that for $n\geq 4$ ARDs are the only random digraphs which have both DERD and VARD representations.
The method we use for the latter result is not applicable for the intersection of the families DVRDs and VARDs,
since we lose the independence of the edges in a VRG.
Therefore, identifying all random digraphs with both DVRD and VARD representations is a challenging problem, and remains open.

For $n=3$, we show that any DERD has a VARD representation and any DERD whose edge probability is $1/2$ is also a VRD.
However, the question whether there is other DERDs with VRD representation is open,
and we conjecture that any DERD is a VRD as well for $n=3$.
Yet, when $n=3$, every DVERD is a DVRD.
However, in the case of $n=2$, any isomorphism-invariant random digraph has DERD, DVRD and VRD representations.

We also study RNNDs and determine where they lie in these classifications.
We show that no RNND has a DRD or a VARD representation,
and the underlying random graph of a RNND with large $n$ is not a VERG.

\section*{Acknowledgments}
%We would like to thank an anonymous associate editor and two referees,
%whose constructive comments and suggestions greatly improved the presentation
%and flow of the paper.
%Most of the Monte Carlo simulations presented in this article
%were executed at Ko\c{c} University High Performance Computing Laboratory.
EC was supported by the European Commission under the Marie Curie International Outgoing Fellowship Programme
via Project \# 329370 titled PRinHDD.

\end{document}